\newtheorem{theorem}{Theorem}[section]
\newtheorem{proposition}[theorem]{Proposition}
\newtheorem{lemma}[theorem]{Lemma}
\newtheorem{corollary}[theorem]{Corollary}
\newtheorem{remark}[theorem]{Remark}
\theoremstyle{definition}
\newtheorem{problem}[theorem]{Problem}
\DeclareMathOperator{\con}{con}
\DeclareMathOperator{\los}{los}
\DeclareMathOperator{\var}{var}
\numberwithin{equation}{section}
\renewcommand*\subjclass[2][2010]{\def\@subjclass{#2}\@ifundefined{subjclassname@#1}{\ClassWarning{\@classname}{Unknown edition (#1) of Mathematics Subject Classification; using '2010'.}}{\@xp\let\@xp\subjclassname\csname subjclassname@#1\endcsname}}
\begin{document}

\title{The lattice of varieties of implication semigroups}
\thanks{The first and the third author were partially supported by the Ministry of Education and Science of the Russian Federation (project 1.6018.2017/8.9) and by the Russian Foundation for Basic Research (grant No.\,17-01-00551).}

\author[S. V. Gusev]{Sergey V. Gusev}
\address{S.V.~Gusev, B.M.~Vernikov: Institute of Natural Sciences and Mathematics, Ural Federal University, Lenina str. 51, 620000 Ekaterinburg, Russia}
\email{sergey.gusb@gmail.com, bvernikov@gmail.com}

\author[H. P. Sankappanavar]{Hanamantagouda P. Sankappanavar}
\address{H.P.~Sankappanavar: Department of Mathematics, State University of New York, New Paltz, NY 12561, New York, U.S.A.}
\email{sankapph@newpaltz.edu}

\author[B. M. Vernikov]{Boris M. Vernikov}

\keywords{Implication semigroup, variety, lattice of varieties}

\subjclass{Primary 06E75, secondary 08B15}

\begin{abstract}
An implication semigroup is an algebra of type $(2,0)$ with a binary operation $\rightarrow$ and a \mbox{0-ary} operation 0 satisfying the identities $(x\rightarrow y)\rightarrow z\approx x\rightarrow(y\rightarrow z)$, $(x\rightarrow y)\rightarrow z\approx\left[(z'\rightarrow x)\rightarrow(y\rightarrow z)'\right]'$ and $0''\approx 0$ where $\mathbf u'$ means $\mathbf u\rightarrow 0$ for any term $\mathbf u$. We completely describe the lattice of varieties of implication semigroups. It turns out that this lattice is non-modular and consists of 16 elements.
\end{abstract}

\maketitle

\section{Introduction and summary}
\label{introduction}

In the article \cite{Sankappanavar-12}, the second author introduced and examined a new type of algebras as a generalization of De Morgan algebras. These algebras are of type $(2,0)$ with a binary operation $\rightarrow$ and a \mbox{0-ary} operation 0 satisfying the identities
\begin{align}
\label{I}&(x\to y)\to z\approx((z'\to x)\to(y\to z)')', \\
\label{0''=0}&0''\approx 0
\end{align}
where $\mathbf u'$ means $\mathbf u\rightarrow 0$ for any term $\mathbf u$. Such algebras are called \emph{implication zroupoids}. We refer an interested reader to \cite{Sankappanavar-12} for detailed explanation of the background and motivations.

The class of all implication zroupoids is a variety denoted by $\mathbf{IZ}$. It seems very natural to examine the lattice of its subvarieties. One of the important and interesting subvarieties of $\mathbf{IZ}$ is the class of all associative implication zroupoids, that is algebras from $\mathbf{IZ}$ satisfying the identity
$$
(x\rightarrow y)\rightarrow z\approx x\rightarrow(y\rightarrow z).
$$
It is natural to call such algebras \emph{implication semigroups}. The class $\mathbf{IS}$ of all implication semigroups forms a subvariety in $\mathbf{IZ}$. This subvariety was implicitly mentioned in \cite[Lemma 8.21]{Sankappanavar-12} and investigated more explicitly in the articles \cite {Cornejo-Sankappanavar-17a,Cornejo-Sankappanavar-17b,Cornejo-Sankappanavar-18}. (Incidentally, we should mention here that implication zroupoids are referred to as ``implicator groupoids'' in \cite{Cornejo-Sankappanavar-17b}.) But only the location of $\mathbf{IS}$ in the subvariety lattice of the variety $\mathbf{IZ}$ and ``interaction" of $\mathbf{IS}$ with other varieties from this lattice were studied in those articles. The aim of this paper is to examine the lattice of subvarieties of the variety $\mathbf{IS}$. Our main result gives a complete description of this lattice.

For convenience of our considerations, we turn to the notation generally accepted in the semigroup theory. As usual, we denote the binary operation by the absence of a symbol, rather than by $\rightarrow$. Since this operation is associative, we will, as a rule, omit brackets in terms. Besides that, the notation 0 for the \mbox{0-ary} operation seems to be inappropriate in the framework of examination of implication semigroups, because it is associated with the operation of fixing the zero element in a semigroup with zero. For this reason, we will denote the \mbox{0-ary} operation by the symbol $\omega$ which does not have any predefined a priori meaning. In this notation, implication semigroups are defined by the associative law $(xy)z\approx x(yz)$ and the following two identities:
\begin{align}
\label{xyz=z omega xyz omega omega}xyz&\approx z\omega xyz\omega^2,\\
\label{omega omega omega=omega}\omega^3&\approx\omega.
\end{align}

To formulate the main result of the article, we need some notation. As usual, elements of the free implication semigroup over a countably infinite alphabet are called \emph{words}, while elements of this alphabet are called \emph{letters}. Words rather than letters are written in bold. We connect two sides of identities by the symbol $\approx$. We denote by $\mathbf T$ the trivial variety of implication semigroups. The variety of implication semigroups given (within $\mathbf{IS}$) by the identity system $\Sigma$ is denoted by $\var\Sigma$. Let us fix notation for the following concrete varieties:
\begin{align*}
&\mathbf B:=\var\{x\approx x^2\},\\
&\mathbf K:=\var\{xyz\approx x^2\approx\omega,\,xy\approx yx\},\\
&\mathbf L:=\var\{xyz\approx x^2\approx\omega\},\\
&\mathbf M:=\var\{xyz\approx\omega,\,xy\approx yx\},\\
&\mathbf N:=\var\{xyz\approx\omega\},\\
&\mathbf{SL}:=\var\{x\approx x^2,\,xy\approx yx\},\\
&\mathbf{ZM}:=\var\{xy\approx\omega\}.
\end{align*}
The lattice of all varieties of implication semigroups is denoted by $\mathbb{IS}$.

The main result of the article is the following

\begin{theorem}
\label{main result}
The lattice $\mathbb{IS}$ has the form shown in Fig. \textup{\ref{IS}}.
\end{theorem}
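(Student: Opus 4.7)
The plan is to proceed in three stages. First, I would mine the axioms of $\mathbf{IS}$ for useful derived identities, especially ones that govern the behavior of the constant $\omega$. The identity \eqref{xyz=z omega xyz omega omega} is a strong rewrite rule: substituting $\omega$ for some of $x,y,z$, and combining with \eqref{omega omega omega=omega} and associativity, should yield a workable normal form for all words of length $\geq 3$ in the $\mathbf{IS}$-free algebra, together with auxiliary identities describing how $\omega$ and $\omega^2$ interact with other letters (for example, showing $\omega^2$ behaves as a two-sided absorbing element in long enough products). These normal forms are the basic infrastructure for everything that follows, and they let one compute the $\mathbf{IS}$-free algebra on finitely many generators explicitly.

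Second, for the classification, I would attach to each subvariety $\mathbf V \leq \mathbf{IS}$ a small battery of ``test identities'' — for instance, whether $\omega \approx \omega^2$, $xy \approx yx$, $x \approx x^2$, $xy \approx \omega$, $xyz \approx \omega$, or $xyz \approx x^2$ holds in $\mathbf V$ — and argue, using the normal forms from the first stage, that each pattern of yes/no answers pins $\mathbf V$ down to exactly one of 16 specific varieties: the nine already named ($\mathbf T$, $\mathbf B$, $\mathbf K$, $\mathbf L$, $\mathbf M$, $\mathbf N$, $\mathbf{SL}$, $\mathbf{ZM}$, $\mathbf{IS}$) together with seven further varieties obtained as joins needed to close the lattice up. This collapse step is the principal obstacle: one must show syntactically that the proposed identity bases are complete modulo $\mathbf{IS}$, so that no exotic subvariety escapes the list, and this seems to require a careful lemma saying that any identity of the form $\mathbf u \approx \mathbf v$ holding in $\mathbf V$ can be reduced via the normal forms to one of the test identities already considered.

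Third, I would verify pairwise distinctness of the 16 candidates and the claimed covering relations by producing, for each covering shown in Figure \ref{IS}, a concrete small implication semigroup — built from two- and three-element semilattices, left and right zero bands, null semigroups, and suitable assignments of $\omega$ — witnessing that the covering is strict. The same small models should also suffice to compute joins and meets and confirm the shape of the Hasse diagram. Finally, the non-modularity claim of the abstract would follow by locating a pentagon sublattice among the 16 varieties in Figure \ref{IS} and checking its five covering relations against the separating algebras already produced.
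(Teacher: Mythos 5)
Your three-stage plan is essentially the architecture of the authors' original proof (reproduced in Appendix A): a normal form for long words, word-problem characterizations of the identities of the named varieties, and small separating algebras. Stage 1 is realizable: one derives $\omega^2\approx\omega$, $\omega x\approx x\omega$, $xyz\approx xyz\omega$, $xyx\approx yx\omega$ and $x^2y\approx xy\omega$, whence every word of length at least $3$ equals $\los(\mathbf w)\,\omega$ in $\mathbf{IS}$, where $\los(\mathbf w)$ retains only the last occurrence of each letter. (Note that one of your proposed test identities, $\omega\approx\omega^2$, holds in all of $\mathbf{IS}$ and so separates nothing.) Stage 3 is likewise unproblematic, and a pentagon such as $\{\mathbf{ZM},\mathbf K,\mathbf L,\mathbf{B\vee ZM},\mathbf{B\vee K}\}$ does witness non-modularity.

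The gap is in Stage 2, and you have named it yourself without closing it: nothing in the proposal shows that an arbitrary subvariety $\mathbf V$ is determined by its answers to the test identities, i.e., that no exotic subvariety escapes the list. That is the entire content of the theorem. The paper closes this in two different ways. The route in Section \ref{proof} is structural rather than syntactic: since $\omega$ is a central idempotent, every $S\in\mathbf{IS}$ is a subdirect product of the band monoid $\omega S$ and the $3$-nilpotent Rees quotient $S/\omega S$, whence $\mathbf V=(\mathbf V\wedge\mathbf B)\vee(\mathbf V\wedge\mathbf N)$ for every subvariety $\mathbf V$; this reduces the whole problem to the known $3$-element lattice $L(\mathbf B)$ and $6$-element lattice $L(\mathbf N)$ plus a count of joins. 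The syntactic route of Appendix A needs, beyond the normal form, a family of exclusion lemmas of the shape ``if $\mathbf V\nsupseteq\mathbf X$ then $\mathbf V$ satisfies a specific identity'' (for instance, $\mathbf V\nsupseteq\mathbf B$ forces $xy\omega\approx yx\omega$; $\mathbf V\nsupseteq\mathbf K$ forces $xy\approx xy\omega$; $\mathbf V\nsupseteq\mathbf{SL}$ forces $\mathbf V\subseteq\mathbf N$), together with explicit finite identity bases for each of the join varieties. Either device would complete your plan; without one of them the ``collapse'' is only asserted. A second, smaller omission: a naive count of joins of the three subvarieties of $\mathbf B$ with the six of $\mathbf N$ gives $18$ candidates, and one must actually prove the coincidences $\mathbf{B\vee K}=\mathbf{B\vee L}$ and $\mathbf{B\vee M}=\mathbf{B\vee N}=\mathbf{IS}$ (the paper does this with the Rees quotient $(B\times K)/I$) to arrive at $16$; your proposal does not anticipate that any of the candidate joins coincide.
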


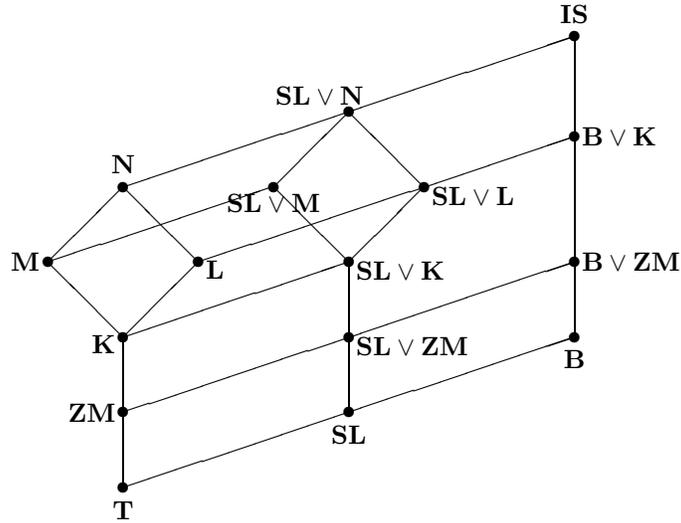
\begin{figure}[htb]
\unitlength=1mm
\linethickness{0.4pt}
\begin{center}
\begin{picture}(89,70)
\put(5,35){\circle*{1.33}}
\put(15,5){\circle*{1.33}}
\put(15,15){\circle*{1.33}}
\put(15,25){\circle*{1.33}}
\put(15,45){\circle*{1.33}}
\put(25,35){\circle*{1.33}}
\put(35,45){\circle*{1.33}}
\put(45,15){\circle*{1.33}}
\put(45,25){\circle*{1.33}}
\put(45,35){\circle*{1.33}}
\put(45,55){\circle*{1.33}}
\put(55,45){\circle*{1.33}}
\put(75,25){\circle*{1.33}}
\put(75,35){\circle*{1.33}}
\put(75,51.67){\circle*{1.33}}
\put(75,65){\circle*{1.33}}
\put(15,5){\line(0,1){20}}
\put(15,25){\line(-1,1){10}}
\put(15,25){\line(1,1){10}}
\put(5,35){\line(1,1){10}}
\put(25,35){\line(-1,1){10}}
\put(45,15){\line(0,1){20}}
\put(45,35){\line(-1,1){10}}
\put(45,35){\line(1,1){10}}
\put(35,45){\line(1,1){10}}
\put(55,45){\line(-1,1){10}}
\put(75,25){\line(0,1){40}}
\put(15,5){\line(3,1){60}}
\put(15,15){\line(3,1){60}}
\put(15,25){\line(3,1){30}}
\put(5,35){\line(3,1){30}}
\put(25,35){\line(3,1){50}}
\put(15,45){\line(3,1){60}}
\put(75,22){\makebox(0,0)[cc]{$\mathbf B$}}
\put(76,35){\makebox(0,0)[lc]{$\mathbf{B\vee ZM}$}}
\put(76,51.67){\makebox(0,0)[lc]{$\mathbf{B\vee K}$}}
\put(75,68){\makebox(0,0)[cc]{$\mathbf{IS}$}}
\put(14,24){\makebox(0,0)[rc]{$\mathbf K$}}
\put(26,34){\makebox(0,0)[lc]{$\mathbf L$}}
\put(2,35){\makebox(0,0)[cc]{$\mathbf M$}}
\put(15,48){\makebox(0,0)[cc]{$\mathbf N$}}
\put(45,12){\makebox(0,0)[cc]{$\mathbf{SL}$}}
\put(46,35){\makebox(0,0)[lt]{$\mathbf{SL\vee K}$}}
\put(56,45){\makebox(0,0)[lt]{$\mathbf{SL\vee L}$}}
\put(35,44){\makebox(0,0)[ct]{$\mathbf{SL\vee M}$}}
\put(47,56){\makebox(0,0)[rb]{$\mathbf{SL\vee N}$}}
\put(46,25){\makebox(0,0)[lt]{$\mathbf{SL\vee ZM}$}}
\put(15,2){\makebox(0,0)[cc]{$\mathbf T$}}
\put(14,15){\makebox(0,0)[rc]{$\mathbf{ZM}$}}
\end{picture}
\end{center}
\caption{The lattice $\mathbb{IS}$}
\label{IS}
\end{figure}

In \cite[Problem 5]{Sankappanavar-12}, the second author formulated the question of whether the lattice of all varieties of implication zroupoids is distributive. The following assertion immediately follows from Fig. \ref{IS} and provides the negative answer to this question.

\begin{corollary}
\label{non-modular}
The lattice $\mathbb{IS}$ is non-modular.\qed
\end{corollary}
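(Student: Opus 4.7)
The corollary claims that the lattice $\mathbb{IS}$ displayed in Fig.~\ref{IS} is non-modular. Since Theorem~\ref{main result} has delivered the full Hasse diagram, my plan is to verify non-modularity by exhibiting a sublattice isomorphic to the pentagon $N_{5}$; by Dedekind's classical characterization, $\mathbb{IS}$ is non-modular if and only if such a sublattice exists.

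The concrete proposal is to consider the five varieties
\[
\mathbf{SL},\quad \mathbf{B},\quad \mathbf{SL\vee K},\quad \mathbf{SL\vee L},\quad \mathbf{B\vee K}.
\]
From Fig.~\ref{IS} one reads off the chain $\mathbf{SL}<\mathbf{SL\vee K}<\mathbf{SL\vee L}<\mathbf{B\vee K}$ on one flank and the relations $\mathbf{SL}<\mathbf{B}<\mathbf{B\vee K}$ on the other, with $\mathbf{B}$ incomparable to both of $\mathbf{SL\vee K}$ and $\mathbf{SL\vee L}$. The only non-obvious computations are then
\[
\mathbf{SL\vee K}\vee\mathbf{B}=\mathbf{SL\vee L}\vee\mathbf{B}=\mathbf{B\vee K}
\qquad\text{and}\qquad
\mathbf{SL\vee K}\wedge\mathbf{B}=\mathbf{SL\vee L}\wedge\mathbf{B}=\mathbf{SL}.
\]
Both are obtained by pure diagram inspection: the principal filters of $\mathbf{SL\vee K}$ and of $\mathbf{SL\vee L}$ first meet the principal filter of $\mathbf{B}$ at $\mathbf{B\vee K}$, while the principal ideals of $\mathbf{SL\vee K}$ and of $\mathbf{SL\vee L}$ intersect the three-element principal ideal $\{\mathbf{T},\mathbf{SL},\mathbf{B}\}$ of $\mathbf{B}$ in a set with maximum element $\mathbf{SL}$.

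With these equalities, the five elements are closed under the induced join and meet and reproduce the Hasse diagram of $N_{5}$, so $\mathbb{IS}$ contains $N_{5}$ as a sublattice and is therefore non-modular. There is no genuine technical obstacle here: the whole argument reduces to diagram inspection once Theorem~\ref{main result} is granted. The one step deserving a moment's care is the meet $\mathbf{SL\vee L}\wedge\mathbf{B}=\mathbf{SL}$, which hinges on the completeness of the 16-element list supplied by Theorem~\ref{main result}; without that completeness one could not rule out a hypothetical variety sitting strictly between $\mathbf{SL}$ and both $\mathbf{SL\vee L}$ and $\mathbf{B}$.
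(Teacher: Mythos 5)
Your argument is correct and is just an explicit version of what the paper does: the paper derives non-modularity directly from Fig.~\ref{IS}, and the pentagon you name, $\mathbf{SL}<\mathbf{SL\vee K}<\mathbf{SL\vee L}<\mathbf{B\vee K}$ against $\mathbf{SL}<\mathbf{B}<\mathbf{B\vee K}$, is a valid copy of $N_5$ in that diagram (the joins and meets you compute check out against the 16-element lattice). You are also right that the argument relies on the completeness of the list from Theorem~\ref{main result}, which is exactly what the theorem guarantees.
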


This article consists of three sections and two appendixes. Section \ref{proof} is devoted to the proof of Theorem \ref{main result}, while Section \ref{problems} contains several open problems. The proof of Theorem \ref{main result} given in Section \ref{proof} is based in essential degree on suggestions of the referee on the journal version of the article. The original version of the proof due to the authors is set forth in Appendix \ref{original proof}. Finally, Appendix \ref{about 0-distr} contains some assertions concerning with one of the problems posed in Section \ref{problems}.

\section{Proof of the main result}
\label{proof}

To verify Theorem \ref{main result}, we need a few auxiliary assertions. If $\mathbf u$ and $\mathbf v$ are words and $\varepsilon$ is an identity then we will write $\mathbf{u\stackrel{\varepsilon}\approx v}$ in the case when the identity $\mathbf{u\approx v}$ follows from $\varepsilon$.

\begin{lemma}
\label{3 identities}
The variety $\mathbf{IS}$ satisfies the following identities:
\begin{align}
\label{omega omega=omega} \omega^2&\approx\omega,\\
\label{omega x=x omega} \omega x&\approx x\omega,\\
\label{xyz=xyz omega} xyz&\approx xyz\omega.
\end{align}
\end{lemma}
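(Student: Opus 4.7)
I would establish \eqref{omega omega=omega}, \eqref{xyz=xyz omega}, and \eqref{omega x=x omega} in that order. The first two come from one-shot substitutions into \eqref{xyz=z omega xyz omega omega} simplified by \eqref{omega omega omega=omega}; the third is the only delicate one and requires \eqref{xyz=xyz omega} as a lemma.

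For \eqref{omega omega=omega} I would substitute $x = y = \omega$ and $z = \omega^2$ directly into \eqref{xyz=z omega xyz omega omega}: the left hand side collapses to $\omega^4$ and the right hand side to $\omega^9$, and reducing both modulo \eqref{omega omega omega=omega} gives $\omega^2$ and $\omega$ respectively. For \eqref{xyz=xyz omega}, once \eqref{omega omega=omega} is in hand \eqref{xyz=z omega xyz omega omega} reads $xyz \approx z\omega xyz\omega$; multiplying on the right by $\omega$ and again simplifying $\omega^2$ to $\omega$ shows that $xyz\omega$ equals the same right hand side, so transitivity yields $xyz \approx xyz\omega$.

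For \eqref{omega x=x omega} I would pull two building blocks out of \eqref{xyz=z omega xyz omega omega}, each simplified via \eqref{omega omega omega=omega} and \eqref{omega omega=omega}: the substitution $y = z = \omega$ yields $x\omega \approx \omega x\omega$, call it (C), while $x = y = \omega$ yields $\omega z \approx z\omega z\omega$, call it $(\ast)$. Now \eqref{xyz=xyz omega}, applied to the three-letter word $z\omega z$, gives $z\omega z \approx z\omega z\omega$, and together with $(\ast)$ this implies $\omega z \approx z\omega z$. Multiplying on the right by $\omega$ gives $\omega z\omega \approx z\omega z\omega$, whose right hand side is $\omega z$ by $(\ast)$, so $\omega z\omega \approx \omega z$. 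Finally, (C) (with $x$ renamed $z$) reads $z\omega \approx \omega z\omega$, and chaining yields $z\omega \approx \omega z$.

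\emph{Main obstacle.} The real subtlety is \eqref{omega x=x omega}: since there is no cancellation in a semigroup, one cannot pass directly from $x\omega \approx \omega x\omega$ to $x\omega \approx \omega x$, and the detour through the three-letter word $z\omega z$ is needed precisely so that \eqref{xyz=xyz omega} can absorb a trailing $\omega$ and collapse $(\ast)$. In particular \eqref{omega x=x omega} must be proved \emph{after} \eqref{xyz=xyz omega}, and identifying the right two auxiliary substitutions (C) and $(\ast)$ is the main planning step.
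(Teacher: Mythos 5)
Your proof is correct, and it is essentially the same argument as the paper's: all three identities are obtained by specializing \eqref{xyz=z omega xyz omega omega} at $\omega$ and simplifying with \eqref{omega omega omega=omega}, and every substitution and reduction in your chains checks out. The only difference is organizational — you establish \eqref{xyz=xyz omega} first and route \eqref{omega x=x omega} through the auxiliary instances $x\omega\approx\omega x\omega$ and $\omega z\approx z\omega z\omega$, whereas the paper derives \eqref{omega x=x omega} directly (before \eqref{xyz=xyz omega}) by a single forward-and-backward application of \eqref{xyz=z omega xyz omega omega}.
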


\begin{proof}
The following three chains of identities provide deductions of the identities \eqref{omega omega=omega}--\eqref{xyz=xyz omega}:
\begin{align*}
\text{\eqref{omega omega=omega}}:\quad&\omega^2\stackrel{\eqref{omega omega omega=omega}}\approx\omega^{10}=\omega^2\omega\omega\omega^2\omega^2\omega^2\stackrel{\eqref{xyz=z omega xyz omega omega}}\approx\omega\omega^2\omega^2=\omega^5\stackrel{\eqref{omega omega omega=omega}}\approx\omega,\\
\text{\eqref{omega x=x omega}}:\quad&\omega x\stackrel{\eqref{omega omega=omega}}\approx\omega\omega x\stackrel{\eqref{xyz=z omega xyz omega omega}}\approx x\omega\omega\omega x\omega^2\stackrel{\eqref{omega omega=omega}}\approx(x\omega\omega\omega x\omega^2)\omega\\
&\phantom{\omega x\,}\stackrel{\eqref{omega omega=omega}}\approx(\omega\omega x)\omega\approx\omega\omega x\omega\omega\omega^2\stackrel{\eqref{xyz=z omega xyz omega omega}}\approx x\omega\omega\stackrel{\eqref{omega omega=omega}}\approx x\omega,\\
\text{\eqref{xyz=xyz omega}}:\quad&xyz\stackrel{\eqref{xyz=z omega xyz omega omega}}\approx z\omega xyz\omega^2\stackrel{\eqref{omega omega=omega}}\approx(z\omega xyz\omega^2)\omega\stackrel{\eqref{xyz=z omega xyz omega omega}}\approx(xyz)\omega.
\end{align*}
Lemma is proved.
\end{proof}

An idempotent $e$ of a semigroup $S$ that commutes with every element in $S$ is said to be a \emph{central idempotent}. The identities \eqref{omega omega=omega} and \eqref{omega x=x omega} show that if $S$ is an implication semigroup then the distinguished element $\omega$ of $S$ is a central idempotent. This explains our interest in the following assertion which is a part of semigroup folklore. We provide its proof here for the sake of completeness.

\begin{lemma}
\label{S is a subdirect product} 
If $e$ is a central idempotent of a semigroup $S$ then $S$ is a subdirect product of its ideal $eS$ and the Rees quotient $S/eS$.
\end{lemma}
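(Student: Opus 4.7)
The plan is to exhibit an explicit subdirect embedding $\varphi\colon S\to eS\times S/eS$. I would set $\varphi(s)=(es,\pi(s))$, where $\pi\colon S\to S/eS$ is the canonical Rees projection sending every element of $eS$ to the adjoined zero and fixing elements outside $eS$. The second component is a homomorphism by the very definition of the Rees quotient, and for the first component the centrality and idempotence of $e$ give $(es)(et)=e^{2}st=est$, so $s\mapsto es$ is a homomorphism from $S$ into the ideal $eS$. Thus $\varphi$ is a homomorphism into the direct product.

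Next I would verify injectivity of $\varphi$. Suppose $\varphi(s_1)=\varphi(s_2)$, i.e.\ $es_1=es_2$ and $\pi(s_1)=\pi(s_2)$. If $s_1\in eS$, write $s_1=eu$; then $es_1=e^{2}u=eu=s_1$, and the equality $\pi(s_1)=0=\pi(s_2)$ forces $s_2\in eS$, whence $s_2=es_2=es_1=s_1$. If instead $s_1\notin eS$, then $\pi(s_1)=s_1\ne 0$, so $\pi(s_2)\ne 0$, which means $s_2\notin eS$ and $s_2=\pi(s_2)=\pi(s_1)=s_1$. In either case $s_1=s_2$, so $\varphi$ is one-to-one.

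Finally, I would check surjectivity of the two coordinate projections of $\varphi(S)$. The projection onto $S/eS$ is surjective because $\pi$ itself is. The projection onto $eS$ is surjective because every element of $eS$ has the form $es$ for some $s\in S$, and the first coordinate of $\varphi(s)$ is precisely $es$. Consequently $\varphi$ is a subdirect embedding of $S$ into $eS\times S/eS$, as required.

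There is no serious obstacle here: the result is essentially bookkeeping with the definition of the Rees quotient, and the only point requiring a little care is the case analysis in the injectivity step, where one must split on whether $s_1$ lies in $eS$ or not and exploit the identity $es=s$ for $s\in eS$ that follows from $e^{2}=e$.
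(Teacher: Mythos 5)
Your proof is correct and follows essentially the same route as the paper's: both use the pair of homomorphisms $s\mapsto es$ and the Rees projection, and both reduce the subdirectness claim to checking that these two maps jointly separate points, using centrality for $(es)(et)=est$ and the identity $es=s$ on $eS$. No issues.
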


\begin{proof}
Clearly, $eS$ is an ideal of $S$ and the natural homomorphism $\eta \colon S \to S/eS$ has the property that $\eta(x)=\eta(y)$ if and only if either $x=y$ or $x, y \in eS$. On the other hand, the map $\varphi\colon S\to eS$ given by the rule $\varphi(x)=ex$ is a homomorphism of $S$ onto $eS$ and $ex=ey$ implies $x=y$ for $x, y \in eS$. Therefore, if $x, y \in S$ are such that $\eta(x) =\eta(y)$ and $\varphi(x)=\varphi(y)$, then $x=y$. We see that $\varphi$ and $\eta$ are surjective homomorphisms from $S$ onto $eS$ and $S/eS$ respectively, and the intersection of kernels of these homomorphisms is the equality relation.  Hence $S$ is a subdirect product of $eS$ and $S/eS$.
\end{proof}

Recall that a semigroup is called a \emph{band} if it satisfies the identity $x^2\approx x$. We call a variety of implication semigroups $\mathbf V$ a \emph{monoid variety} if the identities $x\omega\approx\omega x\approx x$ hold in $\mathbf V$. Obviously, this means that every semigroup in $\mathbf V$ has an identity element and the operation $\omega$ fixes just this element in each semigroup from $\mathbf V$.

\begin{lemma}
\label{bands=monoids}
A variety of implication semigroups is a monoid variety if and only if it is a variety of bands.
\end{lemma}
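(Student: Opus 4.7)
The plan is to establish each implication by a single substitution into one of the identities already at hand, namely the axiom \eqref{xyz=z omega xyz omega omega} and its consequences \eqref{omega x=x omega}, \eqref{xyz=xyz omega} from Lemma~\ref{3 identities}.

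For the direction ``band $\Rightarrow$ monoid'', I would specialize \eqref{xyz=xyz omega} by the substitution $y\mapsto x$, $z\mapsto x$, producing $x^3\approx x^3\omega$. In a variety of bands one has $x^3\approx x$, whence $x\approx x\omega$; combining this with \eqref{omega x=x omega} yields $\omega x\approx x\omega\approx x$, so $\mathbf V$ is a monoid variety.

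For the converse, suppose $x\omega\approx\omega x\approx x$ holds in $\mathbf V$. I would substitute $x\mapsto\omega$ and $y\mapsto\omega$ in \eqref{xyz=z omega xyz omega omega}, obtaining
\[
\omega^2 z\approx z\omega^3 z\omega^2.
\]
Repeated application of the assumed monoid identities $\omega x\approx x\omega\approx x$ collapses the left side to $z$ and the right side to $z\cdot z=z^2$, giving $z\approx z^2$. Thus $\mathbf V$ is a variety of bands.

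No step is technically demanding. The only point worth highlighting is the choice of substitution in the forward direction: replacing precisely two of the three variables by $\omega$ is what annihilates every $\omega$-factor on the right of \eqref{xyz=z omega xyz omega omega} while leaving an unabsorbed copy of $z$ next to $z$, producing the square. A single substitution such as $y\mapsto\omega$ would only yield the weaker consequence $xz\approx zxz$, so the ``double'' specialization is what turns the identity into the band law.
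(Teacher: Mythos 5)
Your proof is correct and follows essentially the same route as the paper: for ``monoid $\Rightarrow$ band'' the paper likewise specializes \eqref{xyz=z omega xyz omega omega} to get $x\approx\omega^2x\approx x\omega^3x\omega^2\approx x^2$, and for the converse it uses the same chain $\omega x\approx x\omega\approx x^3\omega\approx x^3\approx x$ via \eqref{omega x=x omega} and \eqref{xyz=xyz omega}. No issues.
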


\begin{proof}
Any monoid variety satisfies the identities $x\approx \omega^2 x\stackrel{\eqref{xyz=z omega xyz omega omega}}\approx x\omega^3x\omega^2\approx x^2$, while any variety of bands satisfies the identities $\omega x\stackrel{\eqref{omega x=x omega}}\approx x\omega\approx x^3\omega\stackrel{\eqref{xyz=xyz omega}}\approx x^3\approx x$.
\end{proof}

\begin{lemma}
\label{V=(V wedge B) vee (V wedge N)}
If $\mathbf V$ is an implication semigroup variety then $\mathbf V= (\mathbf V \wedge \mathbf B) \vee (\mathbf V \wedge \mathbf N)$.
\end{lemma}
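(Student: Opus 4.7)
The plan is to apply Lemma~\ref{S is a subdirect product} to each algebra $S\in\mathbf V$, taking the central idempotent to be $\omega$. This is legitimate because identities \eqref{omega omega=omega} and \eqref{omega x=x omega} of Lemma~\ref{3 identities} say exactly that $\omega$ is idempotent and central in every implication semigroup. Hence every $S\in\mathbf V$ is a subdirect product of its ideal $\omega S$ and its Rees quotient $S/\omega S$, and it suffices to show that $\omega S\in\mathbf V\wedge\mathbf B$ and $S/\omega S\in\mathbf V\wedge\mathbf N$. The reverse inclusion $(\mathbf V\wedge\mathbf B)\vee(\mathbf V\wedge\mathbf N)\subseteq\mathbf V$ is obvious.

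For $S/\omega S\in\mathbf V\wedge\mathbf N$ the argument is easy: as a homomorphic image of $S$, this quotient belongs to $\mathbf V$; and identity \eqref{xyz=xyz omega} combined with \eqref{omega x=x omega} gives $xyz\approx xyz\omega=xy(z\omega)\in S\omega\subseteq\omega S$, so every triple product collapses to the image of $\omega$ in $S/\omega S$, which is precisely the identity $xyz\approx\omega$ defining $\mathbf N$.

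For $\omega S\in\mathbf V\wedge\mathbf B$ I first note that $\omega S$ is closed under multiplication (being an ideal) and contains $\omega=\omega^2$ by \eqref{omega omega=omega}, so it is a subalgebra of $S$ of type $(2,0)$ and therefore lies in $\mathbf V$. What remains is to verify the band identity on elements of $\omega S$, i.e.\ $(\omega a)^2\approx\omega a$. This is the main obstacle: everything else in the proof is a routine application of Lemma~\ref{S is a subdirect product}, but the band identity requires a specific manipulation of the defining identities of $\mathbf{IS}$. The natural attempt is to specialise \eqref{xyz=z omega xyz omega omega} at $x:=\omega$, $y:=\omega$, $z:=a$; the left-hand side reduces to $\omega a$ via \eqref{omega omega=omega}, and I expect the right-hand side to simplify via \eqref{omega omega omega=omega}, \eqref{omega omega=omega} and \eqref{omega x=x omega} to $(\omega a)^2$, yielding the required identity.

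Putting the three steps together, every $S\in\mathbf V$ is a subdirect product of members of $\mathbf V\wedge\mathbf B$ and $\mathbf V\wedge\mathbf N$, so $S\in(\mathbf V\wedge\mathbf B)\vee(\mathbf V\wedge\mathbf N)$, completing the proof.
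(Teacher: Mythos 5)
Your proof is correct and follows essentially the same route as the paper: decompose each $S\in\mathbf V$ via the central idempotent $\omega$ using Lemma~\ref{S is a subdirect product}, place $S/\omega S$ in $\mathbf N$ via \eqref{xyz=xyz omega} and \eqref{omega x=x omega}, and place $\omega S$ in $\mathbf B$. The only cosmetic difference is that you verify $(\omega a)^2\approx\omega a$ by substituting $x:=\omega$, $y:=\omega$, $z:=a$ into \eqref{xyz=z omega xyz omega omega} directly (a computation that does go through as you expect), whereas the paper obtains the same fact by citing Lemma~\ref{bands=monoids}, whose proof contains exactly that substitution.
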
 

\begin{proof}
We can assume that the variety $\mathbf V$ is generated by an implication semigroup $S$. In view of Lemmas \ref{3 identities} and \ref{S is a subdirect product}, the set $\omega S$ is an ideal of $S$ and $S$ is a subdirect product of $\omega S$ and the Rees quotient $S/\omega S$. Clearly, $\omega S$ is an implication semigroup with the distinguished element $\omega$ and $\omega x=x\omega=x$ for every $x\in\omega S$. Then $\omega S\in\mathbf B$ by Lemma \ref{bands=monoids}. Note also that $S/\omega S$ is an implication semigroup with the distinguished element $\omega S$ and $xyz\stackrel{\eqref{xyz=xyz omega}}=xyz\omega\stackrel{\eqref{omega x=x omega}}=\omega xyz\in \omega S$ for every $x,y,z\in S$. This implies that $S/\omega S$ satisfies the identity $xyz\approx \omega$ and therefore, is contained in the variety $\mathbf N$. Thus, we have proved that $S$ is a subdirect product of the implication semigroups $\omega S\in\mathbf B$ and $S/\omega S\in\mathbf N$. This implies the required conclusion.
\end{proof}

As usual, we denote by $L({\mathbf X})$ the subvariety lattice of the variety $\mathbf X$. 

\begin{proof}[Proof of Theorem \textup{\ref{main result}}]
According to Lemma \ref{bands=monoids}, $\mathbf B$ is a monoid variety. Therefore, it satisfies the identities $xyx\approx x\omega yx\omega^2 \stackrel{\eqref{xyz=z omega xyz omega omega}}\approx \omega yx\approx yx$. The lattice of varieties of band monoids is completely described in \cite{Wismath-86}. In view of \cite[Proposition 4.7]{Wismath-86}, the lattice $L(\mathbf B)$ is the 3-element chain $\mathbf{T\subset SL\subset B}$.

The variety $\mathbf N$ satisfies the identities $\omega x\stackrel{\eqref{omega x=x omega}}\approx x\omega\stackrel{\eqref{omega omega=omega}}\approx x\omega^2\approx\omega$. Hence every semigroup from $\mathbf N$ contains the zero element and the operation $\omega$ fixes just this element in each semigroup from $\mathbf N$. This means that $\mathbf N$ is nothing but the variety of all 3-nilpotent semigroups. The subvariety lattice of this variety has the form shown in Fig. \ref{IS}. This claim can be easily verified directly and is a part of semigroup folklore. It is known at least from the beginning of 1970's (see \cite{Melnik-72}, for instance).

Recall that commutative bands are called \emph{semilattices}. We fix notation for the following semigroups:
\begin{align*}
A&:=\{0,1\}\text{ --- the 2-element semilattice},\\ 
B&:=\langle e,f,1\mid ef=f^2=f, fe=e^2=e\rangle=\{e,f,1\},\\
K&:=\langle a, b,0 \mid ab=ba, a^2=b^2=0 \rangle=\{a,b,ab,0\}\\
L&:=\langle a, b,0 \mid ba=a^2=b^2=0 \rangle=\{a,b,ab,0\},\\
M&:=\langle a, b,0 \mid ab=ba, a^2=ab^2=b^3=0 \rangle=\{a,b,b^2,ab,0\},\\
Z&:=\langle a,0\mid a^2=0\rangle=\{a,0\}
\end{align*}
where 0 and 1 have the usual sense in semigroup context (the zero element of a semigroup and the identity one, respectively). All these semigroups can be considered as implication semigroups. Indeed, it is easy to see that putting $\omega=1$ in $A$, $B$ and $\omega=0$ in $K$, $L$, $M$, $Z$, we achieve the fulfillment of the identities \eqref{xyz=z omega xyz omega omega} and \eqref{omega omega omega=omega}. The variety generated by an implication semigroup $S$ is denoted by $\var S$. It is well known and easily verified that $\mathbf B=\var B$, $\mathbf K=\var K$, $\mathbf L=\var L$, $\mathbf M=\var M$, $\mathbf{SL}=\var A$ and $\mathbf{ZM}=\var Z$.

Now we are going to prove that the lattice $L(\mathbf{SL\vee N})$ has the form shown in Fig. \ref{IS}. Clearly, the implication semigroups $A$, $L$ and $M$ satisfy the identity $xy\omega\approx yx\omega$. So, this identity holds in $\mathbf{SL\vee N}$. Since it is false in $B$, we have that $(\mathbf{SL\vee N})\wedge\mathbf B=\mathbf{SL}$. This fact and Lemma \ref{V=(V wedge B) vee (V wedge N)} imply that $\mathbf V= (\mathbf V \wedge \mathbf{SL}) \vee (\mathbf V \wedge \mathbf N)$ for every subvariety $\mathbf V$ of $\mathbf{SL\vee N}$. Then $\mathbf{SL\vee N}$ has at most 12 subvarieties, namely, the ones shown in Fig. \ref{IS}. We need to verify that these subvarieties are different from each other. For a class $\mathbf X$ of implication semigroups, let $\overline{\mathbf X}$ stand for the class of all semigroup reducts of implication semigroups in $\mathbf X$. Since $\omega\approx x^3$  in $\mathbf N$, we see that $\overline{\mathbf V}$ is a subvariety of $\overline{\mathbf N}$ whenever $\mathbf V$ is a subvariety of $\mathbf N$. Now let $\mathbf V$ and $\mathbf W$ be two different subvarieties of $\mathbf N$. Then the semigroup varieties $\overline{\mathbf V}$ and $\overline{\mathbf W}$ are different as well. It is well known that the semigroup variety $\overline{\mathbf{SL}}$ of all semilattices constitutes a neutral element of the lattice of all semigroup varieties (it is proved explicitly in \cite[Proposition 4.1]{Volkov-05}), whence $\overline{\mathbf V}\vee\overline{\mathbf{SL}}\ne\overline{\mathbf W}\vee\overline{\mathbf{SL}}$. An identity $\mathbf{u\approx v}$ is called a \emph{semigroup identity} if both the words $\mathbf u$ and $\mathbf v$ do not contain the symbol of \mbox{0-ary} operation. Any semigroup identity that differentiates $\overline{\mathbf V}\vee\overline{\mathbf{SL}}$ from $\overline{\mathbf W}\vee\overline{\mathbf{SL}}$ will also differentiate the implication semigroup varieties $\mathbf{V\vee SL}$ and $\mathbf{W\vee SL}$.

Further, we are going to prove that the lattice $L(\mathbf{B\vee ZM})$ has the form shown in Fig. \ref{IS}. First of all, we note that the identity $xy\approx xy\omega$ holds in $B$ and $Z$ but fails in $K$. Therefore, $(\mathbf{B\vee ZM})\wedge\mathbf N=\mathbf{ZM}$. This fact and Lemma \ref{V=(V wedge B) vee (V wedge N)} imply that $\mathbf V= (\mathbf V \wedge \mathbf B) \vee (\mathbf V \wedge \mathbf{ZM})$ for every subvariety $\mathbf V$ of $\mathbf{B\vee ZM}$. Then $\mathbf{B\vee ZM}$ has at most 6 subvarieties, namely, the ones shown in Fig. \ref{IS}. We need to verify that these subvarieties are different from each other. In view of the observations made in the first, the second and the fourth paragraphs of the proof of Theorem \ref{main result}, it remains to show that $\mathbf{SL\vee ZM}\subset \mathbf{B\vee ZM}$. This follows from the fact that the identity $xy\approx yx\omega$ holds in $\mathbf{SL\vee ZM}$ but fails in $\mathbf B$. 

Lemma \ref{V=(V wedge B) vee (V wedge N)} with $\mathbf{V=IS}$ implies that $\mathbf{IS} =\mathbf B \vee \mathbf N$. Since $\mathbf B$ has exactly 3 subvarieties and $\mathbf N$ has exactly 6 ones, we have that $\mathbf{IS}$ has at most 18 subvarieties. Now we aim to show that $\mathbf{B\vee K= B\vee L}$ and $\mathbf{B\vee M=B\vee N}$. The subset $I=\{(e,0),(f,0),(1,0)\}$ of the direct product $B\times K$ forms an ideal of $B \times K$. The Rees quotient $(B\times K)/I$ is a 3-nilpotent implication semigroup that satisfies the identity $x^2 \approx \omega$ but violates the commutative law. Indeed, $(e, a)(f, b) = (f, ab)\ne (e, ab) = (f, b)(e, a)$. We see that $(B \times K)/I$ lies in $\mathbf L$ but does not lie in $\mathbf K$. Note that $\mathbf K$ is the only maximal subvariety of $\mathbf L$. Whence $(B \times K)/I$ generates the variety $\mathbf L$. Since $(B \times K)/I\in\mathbf B \vee\mathbf K$, we have that $\mathbf L\subseteq\mathbf B \vee\mathbf K$. We conclude that $\mathbf B \vee\mathbf L \subseteq\mathbf B \vee\mathbf K$, and the converse inclusion is clear. Thus $\mathbf{B\vee K=B\vee L}$. Further, $\mathbf{B\vee M\supseteq B\vee K=B\vee L}$. Therefore, $\mathbf{L\subseteq B\vee M}$, whence $\mathbf{N=M\vee L\subseteq B\vee M}$. We get that $\mathbf B\vee\mathbf N \subseteq \mathbf B\vee\mathbf M$. The converse inclusion is clear, whence $\mathbf{B\vee M=B\vee N}$. Thus, we have proved that $\mathbf{IS}$ has at most 16 subvarieties, namely, the ones shown in Fig. \ref{IS}. We need to verify that these subvarieties are different from each other. In view of what is said in the fourth and the fifth paragraphs of the proof of Theorem \ref{main result}, it remains to show that $\mathbf{B\vee K}\subset \mathbf{IS}$. This follows from the above-mentioned equalities $\mathbf{IS=B\vee N=B\vee M}$ and the fact that the identity $x\omega\approx x^2$ holds in $K$ and $B$ but fails in $M$.
\end{proof}

\section{Open problems}
\label{problems}

We denote by $\mathbb{IZ}$ the lattice of all varieties of  implication zroupoids. Theorem \ref{main result} shows that the lattice $\mathbb{IZ}$ is non-modular but the following problem still remains open.

\begin{problem}
\label{non-trivial identities?}
Determine whether the lattice $\mathbb{IZ}$ satisfies any non-trivial lattice identity.
\end{problem}

Recall that a lattice $\langle L;\vee,\wedge\rangle$ with the least element 0 is called 0-\emph{distribuive} if it satisfies the implication
$$
\forall x,y,z\in L:\quad x\wedge z=y\wedge z=0\longrightarrow(x\vee y)\wedge z=0.
$$
Lattices of varieties of all classical types of algebras (groups, semigroups, rings, lattices etc.) are well-known to be 0-distributive. The following question seems to be interesting.

\begin{problem}
\label{0-distributive?}
Determine whether the lattice $\mathbb{IZ}$ is $0$-distributive.
\end{problem}

This problem is closely related to knowing the set of all atoms of the lattice $\mathbb{IZ}$. This set is known but not yet published. Indeed, it is well known that any non-trivial variety of algebras contains a simple algebra, i.e. algebra without congruences except the trivial and the universal ones (see \cite[Theorem 10.13]{Burris-Sankappanavar-81}, for instance). The complete list of simple implication zroupoids is provided by \cite[Theorem 5.8]{Cornejo-Sankappanavar-16}. The variety generated by one of these algebras contains either $\mathbf{ZM}$ or $\mathbf{SL}$ or the variety $\mathbf{BA}$ of all Boolean algebras. On the other hand, it is easy to see that these three varieties are atoms of $\mathbb{IZ}$. Combining these observations, we have the following

\begin{remark}
\label{atoms}
The varieties $\mathbf{ZM}$, $\mathbf{SL}$ and $\mathbf{BA}$ are {the only} atoms of the lattice $\mathbb{IZ}$.
\end{remark}

Returning to Problem \ref{0-distributive?}, it is easy to see that this problem is equivalent to the following claim: if $\mathbf A$ is an atom of the lattice $\mathbb{IZ}$ and $\mathbf X$, $\mathbf Y$ are varieties of implication zroupoids with $\mathbf{X,Y\nsupseteq A}$ then $\mathbf{X\vee Y\nsupseteq A}$. We have a proof of this fact in the case when $\mathbf A$ is one of the varieties $\mathbf{SL}$ or $\mathbf{BA}$ (see Appendix \ref{about 0-distr}). But the case when $\mathbf{A=ZM}$ still remains open.

An element $x$ of a lattice $L$ is called \emph{neutral} if, for any $y,z\in L$, the elements $x$, $y$ and $z$ generate a distributive sublattice of $L$. Neutral elements play an important role in the lattice theory. If $a$ is a neutral element of a lattice $L$ then $L$ is a subdirect product of the principal ideal and the principal filter of $L$ generated by $a$ (see \cite[proof of Theorem 254]{Gratzer-11}). So, the knowledge of the set of neutral elements of a lattice gives significant and important information about the structure of this lattice. Fig. \ref{IS} shows that the varieties $\mathbf{SL}$ and $\mathbf{ZM}$ are neutral elements of the lattice $\mathbb{IS}$. The following problem seems to be very interesting.
 
\begin{problem}
\label{are atoms neutral?}
Determine whether $\mathbf{SL}$, $\mathbf{ZM}$ and $\mathbf{BA}$ are neutral elements of the lattice $\mathbb{IZ}$.
\end{problem}

Note that the varieties of all semilattices and of all semigroups with zero multiplication considered as simply semigroup varieties are neutral elements of the lattice of all semigroup varieties (see \cite[Proposition 4.1]{Volkov-05} or Theorem 3.4 in the survey \cite{Vernikov-15}).

\section*{APPENDIX A. An alternative proof of Theorem \ref{main result}}

\renewcommand{\thesection}{\Alph{section}}

\setcounter{section}{0}
\refstepcounter{section}
\label{original proof}

Here we give an alternative proof of Theorem \ref{main result}. This appendix is divided into 7 subsections.

\subsection{Preliminaries}
\label{preliminaries}

We need a number of auxiliary results.

\begin{lemma}
\label{2 identities}
The variety $\mathbf{IS}$ satisfies the following identities:
\begin{align}
\label{xyx=yx omega} xyx&\approx yx\omega,\\
\label{xxy=xy omega} x^2y&\approx xy\omega.
\end{align}
\end{lemma}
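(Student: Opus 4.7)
The plan is to derive both identities from \eqref{xyz=z omega xyz omega omega} by strategic substitutions of $\omega$ for certain variables, combined with the consequences established in Lemma \ref{3 identities} — namely that $\omega^2 \approx \omega$, that $\omega$ is central, and the absorption identity \eqref{xyz=xyz omega}, which extends by substitution of variables by words to $\mathbf{u}\approx\mathbf{u}\omega$ for any word $\mathbf{u}$ of length at least three.

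To prove \eqref{xyx=yx omega}, I would substitute $y\to\omega$ in \eqref{xyz=z omega xyz omega omega}, obtaining $x\omega z \approx z\omega x\omega z\omega^2$. Collecting all $\omega$-factors on each side via centrality, and reducing $\omega$-powers using $\omega^2 \approx \omega$, this simplifies to $xz\omega \approx zxz\omega$. A renaming of variables turns it into $yx\omega \approx xyx\omega$, and one more appeal to \eqref{xyz=xyz omega}, which gives $xyx \approx xyx\omega$, finishes the job.

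To prove \eqref{xxy=xy omega}, I would first record the auxiliary identity $y\omega \approx y^2\omega$, which arises from \eqref{xyz=z omega xyz omega omega} by substituting both $x$ and $y$ by $\omega$ and then simplifying in the same style as above. Substituting $y\to yx$ in this auxiliary identity gives $yx\omega \approx yxyx\omega$, and the extended absorption identity replaces the right-hand side by $yxyx$. At this point I apply \eqref{xyx=yx omega} to the inner three letters, rewriting $yxyx = y(xyx) \approx y\cdot yx\omega = y^2 x\omega$, and one more use of absorption gives $y^2 x\omega \approx y^2 x$. Stringing the equalities together yields $yx\omega \approx y^2 x$, which is \eqref{xxy=xy omega} after swapping the names of $x$ and $y$.

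No real obstacle is anticipated: all steps are mechanical computations with a central idempotent, subject only to careful bookkeeping of $\omega$-powers. The one structural point worth noting is that the two identities must be proved in this order, since the derivation of \eqref{xxy=xy omega} uses \eqref{xyx=yx omega} in an essential way.
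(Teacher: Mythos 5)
Your proof is correct, and it follows essentially the same route as the paper: both derive \eqref{xyx=yx omega} and then \eqref{xxy=xy omega} by purely equational manipulation of \eqref{xyz=z omega xyz omega omega} together with the identities of Lemma \ref{3 identities}, with the second identity depending on the first. The only difference is in bookkeeping — the paper applies \eqref{xyz=z omega xyz omega omega} backwards to collapse $x\omega\omega yx\omega^2$ to $\omega yx$, while you substitute $\omega$ for variables in its forward direction and pass through the auxiliary identity $y\omega\approx y^2\omega$ — but this is the same method in substance.
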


\begin{proof}
The following two chains of identities provide deductions of the identities \eqref{xyx=yx omega} and \eqref{xxy=xy omega}:
\begin{align*}
\text{\eqref{xyx=yx omega}}:\quad&xyx\stackrel{\eqref{xyz=xyz omega}}\approx xyx\omega^4\stackrel{\eqref{omega x=x omega}}\approx x\omega\omega yx\omega^2\stackrel{\eqref{xyz=z omega xyz omega omega}}\approx\omega yx\stackrel{\eqref{omega x=x omega}}\approx yx\omega,\\
\text{\eqref{xxy=xy omega}}:\quad&x^2y\stackrel{\eqref{xyz=xyz omega}}\approx x^2y\omega\stackrel{\eqref{omega x=x omega}}\approx x\omega xy\stackrel{\eqref{xyx=yx omega}}\approx\omega x\omega y\stackrel{\eqref{omega x=x omega}}\approx\omega^2 xy\stackrel{\eqref{omega omega=omega}}\approx\omega xy\stackrel{\eqref{omega x=x omega}}\approx xy\omega.
\end{align*}
Lemma is proved.
\end{proof}

A word that does not contain the symbol $\omega$ is called a \emph{semigroup word}. The \emph{last occurrence sequence} of a word $\mathbf w$, denoted by $\los(\mathbf w)$, is the semigroup word obtained from $\mathbf w$ by retaining only the last occurrence of each letter. If $\mathbf w$ does not contain letters (i.e., if $\mathbf w=\omega^n$ for some natural number $n$) then $\los(\mathbf w)$ is the empty word. The length of a semigroup word $\mathbf w$ is denoted by $\ell(\mathbf w)$. If a word $\mathbf w$ contains the symbol $\omega$ then we put $\ell(\mathbf w)=\infty$. We denote by $\con(\mathbf w)$ the \emph{content} of the word $\mathbf w$, i.e. the set of all letters occurring in $\mathbf w$.

\begin{corollary}
\label{cor w=los(w)omega}
If $\mathbf w$ is a word of a length $\ge 3$ then the variety $\mathbf{IS}$ satisfies the identity
\begin{equation}
\label{w=los(w)omega}
\mathbf w\approx\los(\mathbf w)\omega.
\end{equation}
\end{corollary}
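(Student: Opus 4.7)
The plan is to prove Corollary \ref{cor w=los(w)omega} in two stages: first normalise all occurrences of $\omega$ in $\mathbf w$ to at most one trailing copy, and then carry out an induction on a complexity measure of the resulting word that peels off earlier occurrences of repeated letters one by one.

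Stage one (normalising $\omega$): If $\mathbf w$ contains the symbol $\omega$, I would use \eqref{omega x=x omega} to push every such $\omega$ past each letter to the right end of the word (this only swaps an $\omega$ with an adjacent letter, so the sequence of letters, and hence $\los$, is unchanged), and then collapse the resulting trailing block of $\omega$'s to a single $\omega$ by \eqref{omega omega=omega}. If instead $\mathbf w$ is a semigroup word with $\ell(\mathbf w)\ge 3$, a single application of \eqref{xyz=xyz omega} (its three variables absorbing the first letter, the second letter, and the remaining suffix, respectively) produces the required trailing $\omega$. Either way we reach $\mathbf w\approx\mathbf s\omega$ for some semigroup word $\mathbf s$ with the same letter sequence as $\mathbf w$, so that $\los(\mathbf s)=\los(\mathbf w)$. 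The degenerate case when $\mathbf s$ is empty occurs only when $\mathbf w=\omega^n$, and then \eqref{omega omega=omega} gives $\mathbf w\approx\omega=\los(\mathbf w)\omega$ directly; so from now on I assume $\mathbf s$ nonempty.

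Stage two (main induction): I would prove that $\mathbf s\omega\approx\los(\mathbf s)\omega$ holds in $\mathbf{IS}$ for every nonempty semigroup word $\mathbf s$ by induction on the excess $\ell(\mathbf s)-|\con(\mathbf s)|$. The base case $\mathbf s=\los(\mathbf s)$ is trivial. Otherwise pick a letter $x$ with at least two occurrences in $\mathbf s$ and write $\mathbf s=\mathbf p\,x\,\mathbf q\,x\,\mathbf r$, where the two displayed copies of $x$ are the \emph{last two} occurrences of $x$ in $\mathbf s$, so that neither $\mathbf q$ nor $\mathbf r$ contains the letter $x$. If $\mathbf q$ is nonempty, substituting $y\mapsto\mathbf q$ in \eqref{xyx=yx omega} yields $x\mathbf q x\approx\mathbf q x\omega$; moving the new $\omega$ past $\mathbf r$ via \eqref{omega x=x omega} and \eqref{omega omega=omega} gives $\mathbf s\omega\approx\mathbf p\,\mathbf q\,x\,\mathbf r\,\omega$. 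If $\mathbf q$ is empty, substituting $y\mapsto\mathbf r\omega$ in \eqref{xxy=xy omega} yields $x^2\mathbf r\omega\approx x\mathbf r\omega^2\approx x\mathbf r\omega$, whence $\mathbf s\omega\approx\mathbf p\,x\,\mathbf r\,\omega$. In either case the new word has the same last-occurrence sequence as $\mathbf s$ but strictly smaller excess, and the induction hypothesis finishes the proof.

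The delicate point is that the one-step elimination has to work for every configuration of $\mathbf p$, $\mathbf q$ and $\mathbf r$, including those in which some of these factors are empty; carrying the trailing $\omega$ throughout the induction sidesteps this, because the substitutions into \eqref{xxy=xy omega} and \eqref{xyx=yx omega} then produce well-formed terms even at the boundary cases where a naive substitution $y\mapsto\mathbf q$ or $y\mapsto\mathbf r$ would require an empty term.
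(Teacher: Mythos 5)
Your proof is correct and follows essentially the same route as the paper's: normalise the occurrences of $\omega$ via \eqref{omega x=x omega}, \eqref{omega omega=omega} and \eqref{xyz=xyz omega}, then delete all but the last occurrence of each letter using \eqref{xyx=yx omega} and \eqref{xxy=xy omega}. The only difference is that you make the letter-deletion step explicit as an induction on $\ell(\mathbf s)-|\con(\mathbf s)|$ with a careful treatment of the boundary cases, where the paper leaves this as a brief sketch.
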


\begin{proof}
Since $\ell(\mathbf w)\ge 3$, the identity \eqref{xyz=xyz omega} allows us to assume that $\mathbf w$ is not a semigroup word. Further, in view of the identity \eqref{omega x=x omega}, we can assume that $\mathbf{w=w'}\omega$ for some word $\mathbf w'$. The identities \eqref{xyx=yx omega} and \eqref{xxy=xy omega} permit to delete from $\mathbf w$ all but last occurrences of all letters. Therefore, we can assume that every letter from $\con(\mathbf w)$ occurs in $\mathbf w$ at most one time. Finally, the identities \eqref{omega omega=omega} and \eqref{omega x=x omega} allow us to delete from $\mathbf w$ all but last occurrences of the symbol $\omega$. Therefore, the identity \eqref{w=los(w)omega} holds in $\mathbf{IS}$.
\end{proof}

Note that the identities \eqref{omega omega=omega}--\eqref{xyz=xyz omega}, \eqref{xyx=yx omega} and \eqref{xxy=xy omega} are particular instances of the identity \eqref{w=los(w)omega}.

\begin{corollary}
\label{los(u)=los(v)}
If $\mathbf u$ and $\mathbf v$ are words such that $\ell(\mathbf u),\ell(\mathbf v)\ge 3$ and $\los(\mathbf u)=\los(\mathbf v)$ then the variety $\mathbf{IS}$ satisfies the identity $\mathbf{u\approx v}$.
\end{corollary}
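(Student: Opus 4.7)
The plan is to observe that this corollary is an almost immediate consequence of Corollary \ref{cor w=los(w)omega}. Since the hypotheses are precisely $\ell(\mathbf u), \ell(\mathbf v) \ge 3$, the identity \eqref{w=los(w)omega} applies to each of $\mathbf u$ and $\mathbf v$ separately.

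Concretely, I would argue as follows. By Corollary \ref{cor w=los(w)omega} applied to $\mathbf u$, the variety $\mathbf{IS}$ satisfies $\mathbf u \approx \los(\mathbf u)\omega$. Applying the same corollary to $\mathbf v$, we also have $\mathbf v \approx \los(\mathbf v)\omega$ in $\mathbf{IS}$. Since by hypothesis $\los(\mathbf u) = \los(\mathbf v)$ as formal words, the two right-hand sides $\los(\mathbf u)\omega$ and $\los(\mathbf v)\omega$ are literally the same word. Transitivity of $\approx$ then yields $\mathbf{u \approx v}$ as an identity of $\mathbf{IS}$.

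There is essentially no obstacle here: the work has already been done in the proof of Corollary \ref{cor w=los(w)omega}, where the identities \eqref{omega omega=omega}--\eqref{xyz=xyz omega}, \eqref{xyx=yx omega} and \eqml{xxy=xy omega} were combined to rewrite any word of length at least $3$ into the canonical form $\los(\mathbf w)\omega$. One small point worth noting is that we do not need $\los(\mathbf u)$ itself to have length $\ge 3$: the hypothesis of Corollary \ref{cor w=los(w)omega} is on $\mathbf w$, not on $\los(\mathbf w)$, so the argument goes through even when $\los(\mathbf u)$ is a single letter or the empty word. Thus the statement reduces to a one-line chain of identities.
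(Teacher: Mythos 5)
Your proof is correct and is essentially identical to the paper's own one-line argument: apply Corollary \ref{cor w=los(w)omega} to each side, observe that $\los(\mathbf u)\omega$ and $\los(\mathbf v)\omega$ coincide by hypothesis, and conclude by transitivity. Your remark that no length condition on $\los(\mathbf u)$ is needed is also accurate.
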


\begin{proof}
Indeed, we have $\mathbf u\stackrel{\eqref{w=los(w)omega}}\approx\los(\mathbf u)\omega=\los(\mathbf v)\omega\stackrel{\eqref{w=los(w)omega}}\approx\mathbf v$.
\end{proof}

We need a description of the identities of a few concrete varieties of implication semigroups. We say that a word $\mathbf w$ \emph{contains a square} if $\mathbf{w=ab}^2\mathbf c$ for some word $\mathbf b$ and some (possibly empty) words $\mathbf a$ and $\mathbf c$.

\begin{lemma}
\label{word problem}
A non-trivial identity $\mathbf{ u\approx v}$ holds in the variety:
\begin{itemize}
\item[\textup{(i)}] $\mathbf{SL}$ if and only if $\con(\mathbf u)=\con(\mathbf v)$;
\item[\textup{(ii)}] $\mathbf B$ if and only if $\los(\mathbf u)=\los(\mathbf v)$;
\item[\textup{(iii)}] $\mathbf{ZM}$ if and only if $\ell(\mathbf u),\ell(\mathbf v)\ge 2$;
\item[\textup{(iv)}] $\mathbf K$ if and only if either $\mathbf{u\approx v}$ is the commutative law or each of the words $\mathbf u$ and $\mathbf v$ either contains a square or has a length $\ge 3$;
\item[\textup{(v)}] $\mathbf L$ if and only if each of the words $\mathbf u$ and $\mathbf v$ either contains a square or has a length $\ge 3$;
\item[\textup{(vi)}] $\mathbf M$ if and only if either $\mathbf{u\approx v}$ is the commutative law or $\ell(\mathbf u),\ell(\mathbf v)\ge 3$;
\item[\textup{(vii)}] $\mathbf N$ if and only if $\ell(\mathbf u),\ell(\mathbf v)\ge 3$.
\end{itemize}
\end{lemma}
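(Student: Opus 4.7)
The plan is to prove each of the seven characterizations by establishing two directions: sufficiency (the stated condition implies the identity holds in the variety) and necessity (the identity holding in the variety implies the stated condition). For sufficiency the strategy is to combine the defining identities of each variety with the general $\mathbf{IS}$-identities, and in particular Corollary~\ref{los(u)=los(v)} and Corollary~\ref{cor w=los(w)omega}, to reduce any word satisfying the prescribed hypothesis to a canonical representative depending only on the stated invariant. For necessity, the strategy is to exhibit explicit valuations in the generating semigroups $A$, $B$, $Z$, $K$, $L$, $M$ that separate pairs of words failing the condition.

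Parts (i) and (iii) are routine. In $\mathbf{SL}$, commutativity and idempotency collapse every word to the unordered product of the distinct letters of its content, and the two-element semilattice $A$ separates distinct contents via a $0/1$-assignment. In $\mathbf{ZM}$, the identity $xy\approx\omega$ sends every word of length $\ge 2$ to $\omega$, and the semigroup $Z$ separates a single letter from any longer word. For part (vii) any word of length $\ge 3$ becomes $\omega$ in $\mathbf N$ by $xyz\approx\omega$, while shorter words are separated inside $L$ or $M$, both of which lie in $\mathbf N$.

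The band case (ii) is the conceptual core. For sufficiency I would apply Corollary~\ref{cor w=los(w)omega} to reduce any word $\mathbf w$ of length $\ge 3$ to $\los(\mathbf w)\omega$ and then strip the trailing $\omega$ using the monoid identity $x\omega\approx x$ (valid in $\mathbf B$ by Lemma~\ref{bands=monoids}), obtaining $\mathbf w\approx\los(\mathbf w)$; short words are handled directly by the band and monoid identities. For necessity I exploit that $B$ is, on $\{e,f\}$, a right-zero band with the identity $1$ adjoined, so the value of any word under a valuation $\varphi$ is the last non-$1$ letter that appears, and this value agrees with $\varphi(\los(\mathbf w))$. Given $\los(\mathbf u)\ne\los(\mathbf v)$: if the contents differ, sending a letter belonging to only one of them to $e$ and everything else to $1$ separates the two sides; if the contents agree, I locate the largest position at which the two sequences disagree, assign $e$ and $f$ to the two letters occurring at that position in $\los(\mathbf u)$ and $\los(\mathbf v)$ respectively and $1$ to all other letters, so the last non-$1$ symbol lands precisely at that position with distinct values.

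The 3-nilpotent varieties (iv)--(vi) follow a common pattern. Any word of length $\ge 3$ evaluates to $\omega$ by $xyz\approx\omega$, and any word containing a subword $\mathbf b^2$ reduces to $\omega$ in $\mathbf K$ and $\mathbf L$ by combining $x^2\approx\omega$ with Corollary~\ref{cor w=los(w)omega} and the centrality of $\omega$. For the words surviving these collapses---single letters and length-two products---direct computation inside $K$, $L$, $M$ separates the remaining cases, with the commutativity of $\mathbf K$ and $\mathbf M$ producing the commutative-law exception in (iv) and (vi). The main obstacle I anticipate is packaging the separation argument for $\mathbf B$ cleanly: since $B$ only ``sees'' the last non-identity letter of a product, distinguishing two words whose $\los$ sequences differ only in internal positions requires a careful choice of valuation and a bit of bookkeeping on the position of disagreement to ensure the construction succeeds in every subcase.
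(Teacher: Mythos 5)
Your proposal is correct and follows essentially the same route as the paper: reduce words in $\mathbf B$ (and $\mathbf{SL}$) via Corollary \ref{cor w=los(w)omega} and the monoid identity $x\omega\approx x$ to the canonical form $\los(\mathbf w)$ (resp.\ to the content), and treat (iii)--(vii) by direct computation in the nilpotent generators. You are merely more explicit than the paper in two places it leaves implicit -- the separating valuations in $B$ for the necessity half of (ii) (your ``last non-$1$ letter'' argument with the largest position of disagreement is sound) and the ``evident'' items (iii)--(vii) -- and these added details check out.
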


\begin{proof}
(i) Lemma \ref{bands=monoids} implies that any identity that holds in the variety $\mathbf{SL}$ is equivalent to a semigroup identity. It is well known and can be easily checked that a semigroup identity $\mathbf{u\approx v}$ holds in the variety of semilattices $\mathbf{SL}$ if and only if $\con(\mathbf u)= \con(\mathbf v)$. This completes the proof of the assertion (i).

\smallskip

(ii) Lemma \ref{bands=monoids} shows that an arbitrary identity $\mathbf{u\approx v}$ is equivalent within $\mathbf B$ to the identity $\mathbf{u\omega\approx v\omega}$. This allows us to consider only identities, both sides of which have a length $\ge 3$. Corollary \ref{cor w=los(w)omega} allows us now to delete all but last occurrences of any letter in any word. Furthermore, Lemma \ref{bands=monoids} means that we can delete all occurrences of the symbol $\omega$ in every word. This proves the claim (ii).

\smallskip

(iii)--(vii) These assertions are evident.
\end{proof}

\begin{lemma}
\label{nsupseteq B,K,N}
Let $\mathbf V$ be a variety of implication semigroups.
\begin{itemize}
\item[\textup{(i)}] If $\mathbf{V\nsupseteq B}$ then $\mathbf V$ satisfies the identity
\begin{equation}
\label{xy omega=yx omega}
xy\omega\approx yx\omega.
\end{equation}
\item[\textup{(ii)}] If $\mathbf{V\nsupseteq K}$ then $\mathbf V$ satisfies the identity
\begin{equation}
\label{xy=xy omega}
xy\approx xy\omega.
\end{equation}
\item[\textup{(iii)}] If $\mathbf{V\nsupseteq N}$ then $\mathbf V$ satisfies either the commutative law or the identity
\begin{equation}
\label{x omega=xx}
x\omega\approx x^2.
\end{equation}
\end{itemize}
\end{lemma}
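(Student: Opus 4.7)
Plan. My approach in all three parts is the same: use the hypothesis to produce a witness identity $\mathbf{u\approx v}$ holding in $\mathbf V$ whose combinatorial shape is constrained by the appropriate clause of Lemma \ref{word problem}, then reduce this identity to a tractable two-variable form and deduce the target identity from a finite case check.

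For part (i) I would start from the observation that $\mathbf V\nsupseteq\mathbf B$ gives, by Lemma \ref{word problem}(ii), an identity $\mathbf{u\approx v}$ in $\mathbf V$ with $\los(\mathbf u)\ne\los(\mathbf v)$. Multiplying both sides by $\omega$ and invoking Corollary \ref{cor w=los(w)omega}, I may assume the identity has the shape $\mathbf p\omega\approx\mathbf q\omega$ for distinct semigroup words $\mathbf p,\mathbf q$ in which no letter repeats. Two cases arise. If $\con(\mathbf p)=\con(\mathbf q)$, then some two letters $c,d$ appear in different relative orders in $\mathbf p$ versus $\mathbf q$; substituting $c\mapsto x$, $d\mapsto y$ and everything else $\mapsto\omega$, the two sides normalise (by Corollary \ref{cor w=los(w)omega}) to $xy\omega$ and $yx\omega$, giving the target identity at once. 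If $\con(\mathbf p)\ne\con(\mathbf q)$, I pick a letter $c$ in the symmetric difference and substitute $c\mapsto y$, everything else $\mapsto x$; after normalisation the resulting identity has one of the shapes $xy\omega\approx x\omega$, $yx\omega\approx x\omega$, $y\omega\approx x\omega$, or $w\omega\approx\omega$ with $w\in\{x,y,xy,yx\}$. In each instance, either the substitution $x\mapsto xy$, $y\mapsto x$ combined with \eqref{xyx=yx omega} yields $xyx\omega\approx xy\omega$ and hence $xy\omega\approx yx\omega$, or a further substitution (e.g. $y\mapsto\omega$) collapses $\mathbf V$ to $x\omega\approx\omega$, whence $xy\omega\approx\omega\approx yx\omega$.

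Parts (ii) and (iii) follow the same pattern with different combinatorics. In (ii), Lemma \ref{word problem}(iv) tells me the witness is not the commutative law and that one side is a square-free semigroup word of length $\le 2$, i.e.\ $x$ or $xy$. Substituting auxiliary letters to $\omega$, the other side normalises to one of the eleven canonical forms $x$, $y$, $x^2$, $y^2$, $xy$, $yx$, $\omega$, $x\omega$, $y\omega$, $xy\omega$, $yx\omega$; for each candidate (excluding the forbidden $xy\approx yx$) a short computation using \eqref{xxy=xy omega}, \eqref{xyx=yx omega}, or the lift $x\mapsto xy$ yields $xy\approx xy\omega$. Part (iii) is treated analogously via Lemma \ref{word problem}(vii), with the same reduction to normal forms of a side of length $<3$; each surviving case yields either $xy\approx yx$ or $x\omega\approx x^2$ (for instance, from $xy\approx xy\omega$ the substitution $y\mapsto x$ combined with \eqref{xxy=xy omega} gives $x^2\approx x^2\omega\approx x\omega$).

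The main obstacle is organisational rather than conceptual: the case analysis comprises roughly a dozen two-variable candidate identities per part, each requiring a short ad hoc derivation. The recurring tricks are (a) normalising long words via Corollary \ref{cor w=los(w)omega}, (b) substituting a variable to $\omega$ to force a monoid- or zero-like collapse, (c) substituting a variable to a product $xy$ to upgrade a one-sided identity to a symmetric one, and (d) exploiting the $x\leftrightarrow y$ symmetry to harvest the companion identity.
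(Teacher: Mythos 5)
Your proposal is correct, and it is essentially the paper's own argument: extract a witness identity via Lemma \ref{word problem}, normalise both sides to the form $\los(\cdot)\,\omega$ via Corollary \ref{cor w=los(w)omega}, and finish with substitutions (letters to $\omega$, or a letter to a product) together with \eqref{xyx=yx omega} and \eqref{xxy=xy omega}. The one organisational difference is that the paper first reduces to the case $\mathbf{V\supseteq SL}$, so that every identity of $\mathbf V$ has the same content on both sides; this eliminates all of your unequal-content cases and shrinks each part to two or three canonical shapes (for instance, in (ii) the second word is forced to have length $\ge 3$ with $\los(\mathbf v)\in\{xy,yx\}$). You instead keep those cases and dispose of them by collapsing substitutions such as $x\mapsto\omega$, which yield $x\omega\approx\omega$ and make the target identity hold trivially; I checked the representative instances ($yx\omega\approx x\omega$, $xy\approx x^2$, $xy\approx y\omega$, etc.) and each closes as you claim. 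Your route is longer but entirely self-contained, whereas the paper's reduction buys brevity at the price of the preliminary observation that the target identities already hold in $\mathbf{SL}$. Two details to make explicit when writing it up: the length-one case $\mathbf u=x$ in parts (ii) and (iii) needs the right-multiplication by a fresh letter (as in the paper), and in part (i) you should record which of your listed shapes are handled by the lift $x\mapsto xy$, $y\mapsto x$ and which require the collapse --- for $yx\omega\approx x\omega$ the lift only returns the identity $x^2y\omega\approx xy\omega$, which is already trivial in $\mathbf{IS}$ by \eqref{xxy=xy omega}, so there the substitution $x\mapsto\omega$ (giving $y\omega\approx\omega$) is the step that actually works.
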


\begin{proof}
Lemma \ref{word problem}(i) implies that the identities \eqref{xy omega=yx omega}--\eqref{x omega=xx} hold in the variety $\mathbf{SL}$. Whence, any of these identities is valid in a variety $\mathbf V$ if and only if it is valid in $\mathbf{V\vee SL}$. Further, if $\mathbf{V\vee SL\nsupseteq X}$ where $\mathbf X$ is one of the varieties $\mathbf B$, $\mathbf K$ or $\mathbf N$ then $\mathbf{V\nsupseteq X}$. These observations allow us to assume that $\mathbf{V\supseteq SL}$. Then Lemma \ref{word problem}(i) applies and we conclude that if $\mathbf V$ satisfies an identity $\mathbf{u\approx v}$ then
\begin{equation}
\label{con(u)=con(v)}
\con(\mathbf u)=\con(\mathbf v).
\end{equation}

\smallskip

(i) By the hypothesis, there is an identity $\mathbf{u\approx v}$ that is true in $\mathbf V$ but fails in $\mathbf B$. Lemma \ref{word problem}(ii) implies that $\los(\mathbf u)\ne\los(\mathbf v)$. If one of the words $\mathbf u$ or $\mathbf v$ has a length $\le 2$ then we multiply the identity $\mathbf{u\approx v}$ by $x^2$ for some $x\in\con(\mathbf u)$ from the left. Clearly, this does not change the words $\los(\mathbf u)$ and $\los(\mathbf v)$. Both the sides of the resulting identity have a length $\ge 3$. Thus we can assume without loss of generality that $\ell(\mathbf u),\ell(\mathbf v)\ge 3$. According to Corollary \ref{cor w=los(w)omega}, we can assume also that $\mathbf u=\los(\mathbf u)\omega$ and $\mathbf v=\los(\mathbf v)\omega$. Then the equality \eqref{con(u)=con(v)} implies that $\con\left(\los(\mathbf u)\right)=\con\left(\los(\mathbf v)\right)$. Whence there are letters $x$ and $y$ such that $x$ precedes $y$ in $\mathbf u$ but $y$ precedes $x$ in $\mathbf v$. Now we substitute $\omega$ for all letters from $\con(\mathbf u)$ except $x$ and $y$ in the identity $\mathbf{u\approx v}$. In view of the identities \eqref{omega omega=omega} and \eqref{omega x=x omega}, the obtained identity implies \eqref{xy omega=yx omega}.

\smallskip

(ii) By the hypothesis, there is an identity $\mathbf{u\approx v}$ that is true in $\mathbf V$ but fails in $\mathbf K$. Lemma \ref{word problem}(iv) implies that the identity $\mathbf{u\approx v}$ is not the commutative law and one of the words $\mathbf u$ and $\mathbf v$, say $\mathbf u$, has a length $\le 2$ and is not a square. If $\ell(\mathbf u)=1$ then we can multiply the identity $\mathbf{u\approx v}$ by some letter $y\notin\con(\mathbf u)$ from the right. Thus we can assume that $\ell(\mathbf u)=2$, whence $\mathbf u=xy$. Further, $\mathbf v\ne yx$ because the identity $\mathbf{u\approx v}$ is not the commutative law. The equality \eqref{con(u)=con(v)} implies now that $\ell(\mathbf v)\ge 3$. Hence $\mathbf V$ satisfies the identities $xy=\mathbf{u\approx v}\stackrel{\eqref{w=los(w)omega}}\approx\los(\mathbf v)\omega$ and therefore, the identity
\begin{equation}
\label{xy=los(v)omega}
xy\approx\los(\mathbf v)\omega.
\end{equation}
If $\los(\mathbf v)=xy$ then we are done. Otherwise, $\los(\mathbf v)=yx$ and we have
$$
xy\stackrel{\eqref{xy=los(v)omega}}\approx yx\omega\stackrel{\eqref{xy=los(v)omega}}\approx xy\omega^2\stackrel{\eqref{omega omega=omega}}\approx xy\omega.
$$

\smallskip

(iii) By the hypothesis, there is an identity $\mathbf{u\approx v}$ that is true in $\mathbf V$ but fails in $\mathbf N$. Lemma \ref{word problem}(vii) implies that one of the words $\mathbf u$ and $\mathbf v$, say $\mathbf u$, has a length $\le 2$. We can assume without loss of generality that $\ell(\mathbf u)\le\ell(\mathbf v)$. If $\ell(\mathbf u)=1$ then we can multiply the identity $\mathbf{u\approx v}$ by some letter $y$ from the right. Thus we can assume that $\ell(\mathbf u)=2$, whence $\mathbf u\in\{xy,x^2\}$. Recall that the equality \eqref{con(u)=con(v)} holds.

Suppose at first that $\mathbf u=x^2$. Then $\ell(\mathbf v)\ge 3$ and $\los(\mathbf v)=x$. Therefore, $\mathbf V$ satisfies the identities $x^2=\mathbf{u\approx v}\stackrel{\eqref{w=los(w)omega}}\approx\los(\mathbf v)\omega=x\omega$. Whence, the identity \eqref{x omega=xx} holds in $\mathbf V$.

Finally, let $\mathbf u=xy$. If $\mathbf v=yx$ then the variety $\mathbf V$ is commutative, and we are done. Let now $\mathbf v\ne yx$. Then $\ell(\mathbf v)\ge 3$. Substituting $x$ for $y$ in $\mathbf{u\approx v}$, we get the situation considered in the previous paragraph.
\end{proof}

\begin{lemma}
\label{nsupseteq SL,ZM}
Let $\mathbf V$ be a variety of implication semigroups.
\begin{itemize}
\item[\textup{(i)}] If $\mathbf{V\nsupseteq SL}$ then $\mathbf{V\subseteq N}$.
\item[\textup{(ii)}] If $\mathbf{V\nsupseteq ZM}$ then $\mathbf{V\subseteq B}$.
\end{itemize}
\end{lemma}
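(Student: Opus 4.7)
For part (i), I would exploit Lemma \ref{word problem}(i). The hypothesis $\mathbf{V\nsupseteq SL}$ supplies an identity $\mathbf{u\approx v}$ holding in $\mathbf V$ with $\con(\mathbf u)\ne\con(\mathbf v)$. After possibly swapping sides, pick a letter $x\in\con(\mathbf u)\setminus\con(\mathbf v)$ and substitute $\omega$ for every other letter on both sides. The right-hand side then consists only of $\omega$'s and collapses to $\omega$ by \eqref{omega omega=omega}, while the left-hand side becomes a word in $\{x,\omega\}$ containing $x$; using \eqref{omega x=x omega}, \eqref{omega omega=omega}, and the derived identity $x^3\approx x^2\omega$ (immediate from \eqref{xxy=xy omega} by setting $y=x$), any such word reduces modulo $\mathbf{IS}$ to one of $x$, $x^2$, $x\omega$, or $x^2\omega$. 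Hence $\mathbf V$ satisfies an identity of the shape $x^m\omega^\delta\approx\omega$ with $m\in\{1,2\}$ and $\delta\in\{0,1\}$.

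Now substitute $x\mapsto xyz$. The left-hand side $(xyz)^m\omega^\delta$ has length at least $3$, and its $\los$ equals $xyz$, so Corollary \ref{cor w=los(w)omega} rewrites it as $xyz\,\omega$. Thus $\mathbf V$ satisfies $xyz\omega\approx\omega$, and \eqref{xyz=xyz omega} then gives $xyz\approx\omega$, i.e.\ $\mathbf{V\subseteq N}$.

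For part (ii) I would run a parallel argument using Lemma \ref{word problem}(iii). From $\mathbf{V\nsupseteq ZM}$ extract an identity $\mathbf{u\approx v}$ holding in $\mathbf V$ with, say, $\ell(\mathbf u)\le 1$, so $\mathbf u$ is a single letter $x$. If $\mathbf v$ is a different single letter $y$, substituting $y\mapsto\omega$ trivializes $\mathbf V$; otherwise $\mathbf v$ has at least two symbols, and substituting $x$ for every letter in $\mathbf v$ produces a non-single-letter word in $\{x,\omega\}$. The same normalization as in part (i) shows that $\mathbf V$ must satisfy one of $x\approx\omega$, $x\approx x\omega$, $x\approx x^2$, or $x\approx x^2\omega$. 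The first trivializes $\mathbf V$; the third places $\mathbf V$ in $\mathbf B$ at once; the second makes $\mathbf V$ a monoid variety, so $\mathbf{V\subseteq B}$ by Lemma \ref{bands=monoids}; and the fourth reduces to the second via $x\omega\approx x^2\omega\cdot\omega\stackrel{\eqref{omega omega=omega}}\approx x^2\omega\approx x$.

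The step that deserves a little care is the normalization of words in $\{x,\omega\}$---that every such word is $\mathbf{IS}$-equivalent to one of $\omega,x,x\omega,x^2,x^2\omega$---but this is a short induction on length from the cited identities. Beyond that the plan is bookkeeping; the only genuine pitfall is detecting when a substituted right-hand side could collapse to just $x$ (giving a trivial identity that carries no information), which happens exactly in the isolated single-letter subcase of part (ii) handled separately.
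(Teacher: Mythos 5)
Your proof is correct and follows essentially the same route as the paper's: both parts extract a one-variable identity via Lemma~\ref{word problem}, normalize it using the identities of Lemmas~\ref{3 identities} and~\ref{2 identities} together with Corollary~\ref{cor w=los(w)omega}, and then read off the containment in $\mathbf N$ resp.\ $\mathbf B$. The only difference is cosmetic and occurs in part~(ii), where the paper first passes to $\mathbf{V\vee SL}$ so that Lemma~\ref{word problem}(i) forces $\con(\mathbf v)=\{x\}$ before analyzing $x\approx\mathbf v$, whereas you substitute $x$ for all letters directly and case on the resulting normal form --- a slightly more self-contained variant of the same idea.
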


\begin{proof}
(i) Suppose that $\mathbf{SL\nsubseteq V}$. In view of Lemma \ref{word problem}(i), $\mathbf V$ satisfies an identity $\mathbf{u\approx v}$ with $\con(\mathbf u)\ne\con(\mathbf v)$. We can assume without loss of generality that there is a letter $x\in\con(\mathbf u)\setminus\con(\mathbf v)$. One can substitute $\omega$ to all letters except $x$ in the identity $\mathbf{u\approx v}$. The identities \eqref{omega omega=omega} and \eqref{omega x=x omega} imply that $\mathbf V$ satisfies the identity $x^k\omega\approx\omega$ for some $k$. Corollary \ref{los(u)=los(v)} implies that the variety $\mathbf{IS}$ satisfies the identity $xyz\approx(xyz)^n\omega$ for any natural $n$. Then the identities $xyz\approx(xyz)^k\omega\approx\omega$ hold in $\mathbf V$, whence $\mathbf{V\subseteq N}$.

\smallskip

(ii) Suppose that $\mathbf{ZM\nsubseteq V}$. It suffices to verify that $\mathbf{V\vee SL\subseteq B}$. Whence, we can assume that $\mathbf{V\supseteq SL}$. In view of Lemma \ref{word problem}(iii), $\mathbf V$ satisfies a non-trivial identity of the kind $x\approx\mathbf v$. Lemma \ref{word problem}(i) implies that $\con(\mathbf v)=\{x\}$. Clearly, $\ell(\mathbf v)>1$. If $\ell(\mathbf v)=2$ then $\mathbf v=x^2$, whence $\mathbf{V\subseteq B}$. Let now $\ell(\mathbf v)\ge 3$. Then Corollary \ref{los(u)=los(v)} applies and we conclude that the identity $\mathbf v^2\approx\mathbf v$ is true in the variety $\mathbf{IS}$. Therefore, $\mathbf V$ satisfies the identities $x^2\approx\mathbf v^2\approx\mathbf v\approx x$, whence $\mathbf{V\subseteq B}$ again.
\end{proof}

\subsection{The structure of the lattices $L(\mathbf B)$ and $L(\mathbf N)$}
\label{L(B) and L(N)}

According to Lemma \ref{bands=monoids}, $\mathbf B$ is a monoid variety. Therefore, it satisfies the identities $xyx\stackrel{\eqref{xyx=yx omega}}\approx yx\omega\approx yx$. The lattice of varieties of band monoids is completely described in \cite{Wismath-86}. In view of \cite[Proposition 4.7]{Wismath-86}, the lattice $L(\mathbf B)$ is the 3-element chain $\mathbf{T\subset SL\subset B}$.

The variety $\mathbf N$ satisfies the identities $\omega x\stackrel{\eqref{omega x=x omega}}\approx x\omega\stackrel{\eqref{omega omega=omega}}\approx x\omega^2\approx\omega$. Hence every semigroup from $\mathbf N$ contains the zero element and the operation $\omega$ fixes just this element in each semigroup from $\mathbf N$. This means that $\mathbf N$ is nothing but the variety of all nilpotent of degree 3 semigroups. The subvariety lattice of this variety has the form shown in Fig. \ref{IS}. This claim can be easily verified directly and is a part of a semigroup folklore. It is known at least from the beginning of 1970's (see \cite{Melnik-72}, for instance).

\subsection{Identity bases for certain varieties}
\label{five identity bases}

Here we prove the following equalities:
\begin{align}
\label{SL vee K}&\mathbf{SL\vee K}=\var\{x\omega\approx x^2,\,xy\approx yx\},\\
\label{SL vee L}&\mathbf{SL\vee L}=\var\{x\omega\approx x^2,\,xy\omega\approx yx\omega\},\\
\label{SL vee M}&\mathbf{SL\vee M}=\var\{xy\approx yx\},\\
\label{SL vee N}&\mathbf{SL\vee N}=\var\{xy\omega\approx yx\omega\},\\
\label{SL vee ZM}&\mathbf{SL\vee ZM}=\var\{xy\approx yx\omega\}.
\end{align}
Let $\varepsilon$ be one of these equalities. The fact that the left-hand side of $\varepsilon$ is contained in its right-hand side is evident. One can verify reverse inclusions.

We start with the equality \eqref{SL vee N}. Let $\mathbf{u\approx v}$ be an identity that holds in the variety $\mathbf{SL\vee N}$. The items (i) and (vii) of Lemma \ref{word problem} imply that $\con(\mathbf u)=\con(\mathbf v)$ and $\ell(\mathbf u),\ell(\mathbf v)\ge 3$. Put $\con(\mathbf u)=\con(\mathbf v)=\{x_1,x_2,\dots,x_n\}$. Then Corollary \ref{cor w=los(w)omega} allows us to assume that our identity coincides with the identity
$$
x_1x_2\cdots x_n\omega\approx x_{\pi(1)}x_{\pi(2)}\cdots x_{\pi(n)}\omega
$$
for some permutation $\pi$ on the set $\{1,2,\dots,n\}$. But an arbitrary identity of such a form follows from the identities \eqref{omega omega=omega}, \eqref{omega x=x omega} and \eqref{xy omega=yx omega}, whence $\var\{xy\omega\approx yx\omega\}\subseteq\mathbf{SL\vee N}$. This proves the equality \eqref{SL vee N}.

The items (vi) and (vii) of Lemma \ref{word problem} imply that a unique identity that holds in $\mathbf{SL}\vee\mathbf M$ but fails in $\mathbf{SL}\vee\mathbf N$ is the commutative law. In view of what was said in the previous paragraph, this implies the equality \eqref{SL vee M}.

Let now $\mathbf{u\approx v}$ be an arbitrary identity that holds in $\mathbf{SL\vee L}$. The items (i) and (v) of Lemma \ref{word problem} imply that $\con(\mathbf u)=\con(\mathbf v)$ and each of the words $\mathbf u$ and $\mathbf v$ either has a length $\ge 3$ or contains a square. If one of the sides of the identity $\mathbf{u\approx v}$ contains a square then we can apply the identity \eqref{x omega=xx}. Both the sides of the obtained identity will have a length $\ge 3$. Thus, we can assume that $\ell(\mathbf u),\ell(\mathbf v)\ge 3$. Then the items (i) and (vii) of Lemma \ref{word problem} apply with the conclusion that the identity $\mathbf{u\approx v}$ holds in $\mathbf{SL\vee N}$. The equality \eqref{SL vee N} implies that $\var\{x\omega\approx x^2,\,xy\omega\approx yx\omega\}\subseteq\mathbf{SL\vee N}$. Therefore, the identity $\mathbf{u\approx v}$ holds in $\var\{x\omega\approx x^2,\,xy\omega\approx yx\omega\}$. Thus, the last variety is contained in $\mathbf{SL\vee L}$. This proves the equality \eqref{SL vee L}.

Let now $\mathbf{u\approx v}$ be an arbitrary identity that holds in $\mathbf{SL\vee K}$. Suppose that this identity differs from the commutative law. Comparison of the items (iv) and (v) of Lemma \ref{word problem} shows that this identity holds in $\mathbf{SL\vee L}$. Besides that, the variety $\mathbf{SL\vee K}$ is commutative. The equality \eqref{SL vee L} proved in the previous paragraph implies now that the identity $\mathbf{u\approx v}$ holds in the variety
$$
\var\{x\omega\approx x^2,\,xy\omega\approx yx\omega,\,xy\approx yx\}=\var\{x\omega\approx x^2,\,xy\approx yx\}.
$$
Thus, the last variety is contained in $\mathbf{SL\vee K}$ in any case. This proves the equality \eqref{SL vee K}.

Finally, let $\mathbf{u\approx v}$ be an identity that holds in $\mathbf{SL\vee ZM}$. The items (i) and (iii) of Lemma \ref{word problem} imply that $\con(\mathbf u)=\con(\mathbf v)$ and $\ell(\mathbf u),\ell(\mathbf v)\ge 2$. Further, the items (i) and (vii) of the same lemma imply that the identity
\begin{equation}
\label{u omega=v omega}
\mathbf u\omega\approx\mathbf v\omega
\end{equation}
holds in $\mathbf{SL\vee N}$. The equality \eqref{SL vee N} implies that the last identity follows from the identity \eqref{xy omega=yx omega}. In turn, the last identity follows from the identity
\begin{equation}
\label{xy=yx omega}
xy\approx yx\omega.
\end{equation}
Indeed, if we multiply \eqref{xy=yx omega} on $\omega$ from the right and use the identity \eqref{omega omega=omega}, we obtain \eqref{xy omega=yx omega}. Summarizing all we say, we get that the identity \eqref{u omega=v omega} holds in the variety $\var\{xy\approx yx\omega\}$. Besides that, it is clear that $\var\{xy\approx yx\omega\}\nsupseteq\mathbf K$. Lemma \ref{nsupseteq B,K,N}(ii) implies that the variety $\var\{xy\approx yx\omega\}$ satisfies the identity \eqref{xy=xy omega}. Hence this variety satisfies the identities $\mathbf u\stackrel{\eqref{xy=xy omega}}\approx\mathbf u\omega\stackrel{\eqref{u omega=v omega}}\approx\mathbf v\omega\stackrel{\eqref{xy=xy omega}}\approx\mathbf v$. The equality \eqref{SL vee ZM} is proved.

\subsection{The structure of the lattice $L(\mathbf{SL\vee ZM})$}
\label{L(SL vee ZM)}

Here we aim to verify that the variety $\mathbf{SL\vee ZM}$ contains only four subvarieties, namely $\mathbf T$, $\mathbf{SL}$, $\mathbf{ZM}$ and $\mathbf{SL\vee ZM}$. Let $\mathbf{V\subset SL\vee ZM}$. Results of Subsection \ref{L(B) and L(N)} imply that $\mathbf{SL}$ and $\mathbf{ZM}$ are minimal non-trivial varieties of implication semigroups. Thus, it suffices to check that $\mathbf V$ is contained in one of the varieties $\mathbf{SL}$ or $\mathbf{ZM}$. Clearly, either $\mathbf{SL\nsubseteq V}$ or $\mathbf{ZM\nsubseteq V}$. If $\mathbf{ZM\nsubseteq V}$ then Lemma \ref{nsupseteq SL,ZM}(ii) implies that $\mathbf{V\subseteq B}$. Since the variety $\mathbf{SL\vee ZM}$ and therefore, $\mathbf V$ is commutative, we have that $\mathbf{V\subseteq SL}$ in this case. Suppose now that $\mathbf{SL\nsubseteq V}$. Lemma \ref{nsupseteq SL,ZM}(i) implies that $\mathbf{V\subseteq N}$. Suppose that $\mathbf{K\subseteq V}$. Then $\mathbf{K\subseteq SL\vee ZM}$. The equality \eqref{SL vee ZM} implies then that the identity \eqref{xy=yx omega} is true in $\mathbf K$. But this identity implies within $\mathbf N$ the identities $xy\approx yx\omega\approx\omega$ (see the second paragraph in Subsection \ref{L(B) and L(N)}). The identity $xy\approx\omega$ evidently fails in $\mathbf K$, a contradiction. We prove that $\mathbf{V\subseteq N}$ but $\mathbf{K\nsubseteq V}$. The description of the lattice $L(\mathbf N)$ given in Subsection \ref{L(B) and L(N)} implies now that $\mathbf{V\subseteq ZM}$ (see Fig. \ref{IS}).

\subsection{The structure of the lattice $L(\mathbf{SL\vee N})$}
\label{L(SL vee N)}

Here we are going to prove that the lattice $L(\mathbf{SL\vee N})$ has the form shown in Fig. \ref{IS}. Let $\mathbf{V\subset SL\vee N}$. By Lemma \ref{nsupseteq SL,ZM}(i), the lattice $L(\mathbf{SL\vee N})$ is the set-theoretical union of the lattice $L(\mathbf N)$ and the interval $[\mathbf{SL},\mathbf{SL\vee N}]$. The lattice $L(\mathbf N)$ is described in Subsection \ref{L(B) and L(N)}. It remains to verify that the interval $[\mathbf{SL},\mathbf{SL\vee N}]$ has the form shown in Fig. \ref{IS}. In other words, we need to check that if $\mathbf{SL\subseteq V\subset SL\vee N}$ then $\mathbf V$ coincides with one of the varieties $\mathbf{SL}$, $\mathbf{SL\vee ZM}$, $\mathbf{SL\vee K}$, $\mathbf{SL\vee L}$ or $\mathbf{SL\vee M}$. Since $\mathbf{N\nsubseteq V}$, Lemma \ref{nsupseteq B,K,N}(iii) and the equalities \eqref{SL vee L} and \eqref{SL vee M} imply that $\mathbf V$ is contained in one of the varieties $\mathbf{SL\vee L}$ or $\mathbf{SL\vee M}$. If $\mathbf V$ coincides with one of these varieties then we are done. Let now $\mathbf{V\ne SL\vee L}$ and $\mathbf{V\ne SL\vee M}$.

Suppose that $\mathbf{SL\vee K\subseteq V}$. If $\mathbf{V\subset SL\vee L}$ then the variety $\mathbf V$ is commutative by the items (iv) and (v) of Lemma \ref{word problem}. Then the equalities \eqref{SL vee K} and \eqref{SL vee L} show that $\mathbf{V=SL\vee K}$. Suppose now that $\mathbf{V\subset SL\vee M}$. Then the items (iv) and (vi) of Lemma \ref{word problem} imply that $\mathbf V$ satisfies a non-trivial identity of the kind $\mathbf{u\approx v}$ where the word $\mathbf u$ contains a square and $\ell(\mathbf u)<3$. Clearly, $\mathbf u=x^2$. Thus, $\mathbf V$ satisfies a non-trivial identity of the kind $x^2\approx\mathbf v$. Lemma \ref{word problem}(i) implies that $\con(\mathbf v)=\{x\}$. It is clear that $\ell(\mathbf v)\ne 2$. If $\ell(\mathbf v)=1$ then our identity implies $x^3\approx x^2$. Finally, let $\ell(\mathbf v)\ge 3$. Then $\mathbf V$ satisfies the identities $x^2\approx\mathbf v\stackrel{\eqref{w=los(w)omega}}\approx\los(\mathbf v)\omega=x\omega$. Besides that, $\mathbf V$ is commutative because $\mathbf{V\subseteq SL\vee M}$. The equality \eqref{SL vee K} implies now that $\mathbf{V=SL\vee K}$.

It remains to consider the case when $\mathbf{SL\vee K\nsubseteq V}$. Lemma \ref{nsupseteq B,K,N}(ii) and the equality \eqref{SL vee N} imply that the identities $xy\stackrel{\eqref{xy=xy omega}}\approx xy\omega\stackrel{\eqref{xy omega=yx omega}}\approx yx\omega$ hold in $\mathbf V$. Therefore, $\mathbf V$ satisfies the identity \eqref{xy=yx omega}. This fact together with the equality \eqref{SL vee ZM} imply that $\mathbf{V\subseteq SL\vee ZM}$. The result of Subsection \ref{L(SL vee ZM)} implies now that either $\mathbf{V=SL\vee ZM}$ or $\mathbf{V=SL}$ (see Fig. \ref{IS}).

\subsection{The structure of the lattice $L(\mathbf{B\vee ZM})$}
\label{L(B vee ZM)}

Here we aim to check that the lattice $L(\mathbf{B\vee ZM})$ has the form shown in Fig. \ref{IS}. First of all we prove that
\begin{equation}
\label{B vee ZM}
\mathbf{B\vee ZM}=\var\{xy\approx xy\omega\}.
\end{equation}
The inclusion $\mathbf{B\vee ZM}\subseteq\var\{xy\approx xy\omega\}$ follows from Lemma \ref{bands=monoids} and the definition of the variety $\mathbf{ZM}$. One can verify the inverse inclusion. Let $\mathbf{u\approx v}$ be an arbitrary identity that holds in $\mathbf{B\vee ZM}$. The items (ii) and (iii) of Lemma \ref{word problem} imply that $\los(\mathbf u)=\los(\mathbf v)$ and $\ell(\mathbf u),\ell(\mathbf v)\ge 2$. Clearly, $\los(\mathbf u\omega)=\los(\mathbf v\omega)$. Now Corollary \ref{los(u)=los(v)} implies that the variety $\mathbf{IS}$ satisfies the identity $\mathbf u\omega\approx\mathbf v\omega$. Then the variety $\var\{xy\approx xy\omega\}$ satisfies the identities $\mathbf u\stackrel{\eqref{xy=xy omega}}\approx\mathbf u\omega\approx\mathbf v\omega\stackrel{\eqref{xy=xy omega}}\approx\mathbf v$, whence $\mathbf{u\approx v}$ holds in $\var\{xy\approx xy\omega\}$. The equality \eqref{B vee ZM} is proved.

Let $\mathbf{V\subset B\vee ZM}$. In view of the results of Subsections \ref{L(B) and L(N)} and \ref{L(SL vee ZM)}, it remains to verify that either $\mathbf{ V\subseteq B}$ or $\mathbf{V\subseteq SL\vee ZM}$. Clearly, either $\mathbf{ZM\nsubseteq V}$ or $\mathbf{B\nsubseteq V}$. If $\mathbf{ZM\nsubseteq V}$ then Lemma \ref{nsupseteq SL,ZM}(ii) implies that $\mathbf V\subseteq\mathbf B$. Suppose now that $\mathbf{B\nsubseteq V}$. Then Lemma \ref{nsupseteq B,K,N}(i) implies that $\mathbf V$ satisfies the identity \eqref{xy omega=yx omega}. Besides that, the equality \eqref{B vee ZM} implies that the identity \eqref{xy=xy omega} holds in $\mathbf V$. Therefore, the identities $xy\stackrel{\eqref{xy=xy omega}}\approx xy\omega\stackrel{\eqref{xy omega=yx omega}}\approx yx\omega$ are satisfied by $\mathbf V$. We see that $\mathbf V$ satisfies the identity \eqref{xy=yx omega}. This fact and the equality \eqref{SL vee ZM} implies that $\mathbf{V\subseteq SL\vee ZM}$.

\subsection{Completion of the proof}
\label{end of the proof}

First of all, we verify that $\mathbf{IS=B\vee N}$. Let $\mathbf{u\approx v}$ be an arbitrary identity that holds in $\mathbf{B\vee N}$. The items (ii) and (vii) of Lemma \ref{word problem} imply that $\los(\mathbf u)=\los(\mathbf v)$ and $\ell(\mathbf u),\ell(\mathbf v)\ge 3$. Then Corollary \ref{los(u)=los(v)} implies that the variety $\mathbf{IS}$ satisfies the identity $\mathbf{u\approx v}$, and we are done.

Now we verify that
\begin{equation}
\label{B vee K}
\mathbf B\vee\mathbf K=\var\{x\omega\approx x^2\}.
\end{equation}
Lemma \ref{bands=monoids} and the definition of the variety $\mathbf B$ imply that this variety satisfies the identity \eqref{x omega=xx}. The variety $\mathbf K$ also satisfies this identity by Lemma \ref{word problem}(iv). Hence $\mathbf{B\vee K}\subseteq\var\{x\omega\approx x^2\}$. One can verify the inverse inclusion. Let $\mathbf{u\approx v}$ be an arbitrary identity that holds in $\mathbf{B\vee K}$. The items (ii) and (iv) of Lemma \ref{word problem} and the fact that the variety $\mathbf B$ is non-commutative imply that $\los(\mathbf u)=\los(\mathbf v)$ and each of the words $\mathbf u$ and $\mathbf v$ either has a length $\ge 3$ or contains a square. If a word contains a square then we can apply the identity \eqref{x omega=xx} and obtain a word of a length $\ge 3$. Thus, we can assume that $\ell(\mathbf u),\ell(\mathbf v)\ge 3$. Then Corollary \ref{los(u)=los(v)} implies that the identity $\mathbf{u\approx v}$ holds in the variety $\mathbf{IS}$ and therefore, in $\var\{x\omega\approx x^2\}$. The equality \eqref{B vee K} is proved.

Now we are well prepared to quickly complete the proof of Theorem \ref{main result}. Let $\mathbf{V\subset IS}$. Since $\mathbf{IS=B\vee N}$, either $\mathbf{B\nsubseteq V}$ or $\mathbf {N\nsubseteq V}$. In the former case, Lemma \ref{nsupseteq B,K,N}(i) and the equality \eqref{SL vee N} imply that $\mathbf{V\subseteq SL\vee N}$, while in the latter case $\mathbf{V\subseteq B\vee K}$ by Lemma \ref{word problem}(iv) and the equality \eqref{B vee K}. As we have seen in Subsection \ref{L(SL vee N)}, the lattice $L(\mathbf{SL\vee N})$ has the form shown in Fig. \ref{IS}. It remains to consider the lattice $L(\mathbf{B\vee K})$.

Let $\mathbf{V\subset B\vee K}$. Then either $\mathbf{B\nsubseteq V}$ or $\mathbf{K\nsubseteq V}$. As we have seen in the previous paragraph, $\mathbf{V\subseteq SL\vee N}$ in the former case. Then $\mathbf{V\subseteq(SL\vee N)\wedge(B\vee K)}$. Now the equalities \eqref{SL vee L}, \eqref{SL vee N} and \eqref{B vee K} apply with the conclusion that $\mathbf{V\subseteq SL\vee L}$. According to results of Subsection \ref{L(SL vee N)}, the lattice $L(\mathbf{SL\vee L})$ has the form shown in Fig. \ref{IS}. Finally, let $\mathbf{K\nsubseteq V}$. Then Lemma \ref{nsupseteq B,K,N}(ii) and the equality \eqref{B vee ZM} imply that $\mathbf{V\subseteq B\vee ZM}$. It remains to refer to results of Subsection \ref{L(B vee ZM)}.

Theorem \ref{main result} is proved.\qed 

\section*{APPENDIX B. On Problem \ref{0-distributive?}}

\refstepcounter{section}
\label{about 0-distr}

Here we consider arbitrary varieties of implication zroupoids without assumption that the binary operation is associative. For this reason, we return below to the original notation of the binary and \mbox{0-ary} operations by $\to$ and 0 respectively. 

As we have already noted in Section \ref{problems}, Problem \ref{0-distributive?} is equivalent to the following claim: if $\mathbf A$ is one of the varieties $\mathbf{ZM}$, $\mathbf{SL}$ or $\mathbf{BA}$, while $\mathbf X$, $\mathbf Y$ are varieties of implication zroupoids with $\mathbf{X,Y\nsupseteq A}$ then $\mathbf{X\vee Y\nsupseteq A}$. Here we prove the following

\begin{proposition}
\label{without SL or BA}
If $\mathbf A$ is one of the varieties $\mathbf{SL}$ or $\mathbf{BA}$, while $\mathbf X$, $\mathbf Y$ are varieties of implication zroupoids with $\mathbf{X,Y\nsupseteq A}$ then $\mathbf{X\vee Y\nsupseteq A}$.
\end{proposition}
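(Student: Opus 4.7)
My plan is to establish the proposition by producing, for each $\mathbf{A}\in\{\mathbf{SL},\mathbf{BA}\}$, a concrete identity $\varepsilon_{\mathbf{A}}$ in the implication-zroupoid signature with two properties: $\varepsilon_{\mathbf{A}}$ fails in the 2-element generator of $\mathbf{A}$ (so any variety satisfying it cannot contain $\mathbf{A}$), and every variety $\mathbf{V}$ of implication zroupoids with $\mathbf{V}\nsupseteq\mathbf{A}$ already satisfies $\varepsilon_{\mathbf{A}}$. Once such a universal equational witness is produced, the proposition is immediate: given $\mathbf{X},\mathbf{Y}\nsupseteq\mathbf{A}$, both satisfy $\varepsilon_{\mathbf{A}}$, hence the join $\mathbf{X}\vee\mathbf{Y}$ does too (identities are closed under joins of varieties), and therefore $\mathbf{X}\vee\mathbf{Y}\nsupseteq\mathbf{A}$.

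The first property is a routine verification in the relevant 2-element algebra; the substantive work is the second. For $\mathbf{A}=\mathbf{SL}$, I would model the argument on Lemma \ref{nsupseteq SL,ZM}(i): in the implication-semigroup case the witness identity was $xyz\approx\omega$, and its derivation begins with any identity $\mathbf{u}\approx\mathbf{v}$ valid in $\mathbf{V}$ but failing in $\mathbf{SL}$, exploits the content discrepancy $\con(\mathbf{u})\neq\con(\mathbf{v})$ by substituting $0$ for the redundant letters, and propagates the resulting collapse using the IZ axioms $(x\to y)\to z\approx((z'\to x)\to(y\to z)')'$ and $0''\approx 0$. The natural IZ-analogue $\varepsilon_{\mathbf{SL}}$ is an identity of the form $t(\vec{x})\approx 0$ for a carefully chosen non-trivial term $t$; without the luxury of associativity, $t$ must be chosen so that substitution together with the two IZ axioms suffice to derive it from any given failing identity.

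For $\mathbf{A}=\mathbf{BA}$, the approach exploits the primality of the 2-element Boolean algebra $\mathbf{2}$, the unique subdirectly irreducible member of $\mathbf{BA}$. I would combine this with the classification of simple implication zroupoids of Cornejo--Sankappanavar invoked after Remark \ref{atoms}: since that classification partitions simple IZ algebras into three branches generating $\mathbf{ZM}$, $\mathbf{SL}$ and $\mathbf{BA}$ respectively, avoiding $\mathbf{2}$ should be encodable by an identity governing the double-negation operation $x''$, e.g.\ an identity forcing some Boolean-characteristic term to collapse to $0$ or to its argument in a degenerate way. The principal obstacle is isolating a \emph{single} such identity: unlike the $\mathbf{SL}$ case, where a clean nilpotent witness is available, the Boolean case is delicate because the equational theory of $\mathbf{BA}$ is rich and the failure of associativity in IZ blocks the direct transfer of semigroup-style arguments. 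A safe fallback, still sufficient for the closure property under joins, is to let $\varepsilon_{\mathbf{BA}}$ be a finite conjunction of identities jointly violated in $\mathbf{2}$ but individually holding in every $\mathbf{V}\nsupseteq\mathbf{BA}$.
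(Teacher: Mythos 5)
The overall reduction is sound (an identity holding in $\mathbf X$ and $\mathbf Y$ holds in $\mathbf{X\vee Y}$, and an identity failing in the generator of $\mathbf A$ excludes $\mathbf A$), but in both halves the decisive witness is exactly what you do not produce, and for $\mathbf{SL}$ you commit to a claim that is stronger than needed and nowhere established. Positing a single fixed identity $\varepsilon_{\mathbf{SL}}$ satisfied by \emph{every} variety of implication zroupoids omitting $\mathbf{SL}$ amounts to asserting that these varieties have a greatest member defined by one identity; that is precisely the kind of statement that the loss of associativity makes problematic, since the semigroup witness $xyz\approx\omega$ of Lemma \ref{nsupseteq SL,ZM}(i) is derived through the normal-form machinery of Corollary \ref{cor w=los(w)omega}, which has no $\mathbf{IZ}$ analogue. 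The paper sidesteps this entirely with a pair-dependent witness: from $\mathbf{X\nsupseteq SL}$ it extracts an identity $\mathbf u(x)\approx\mathbf p$ with $\con(\mathbf u)=\{x\}$ and $\con(\mathbf p)=\varnothing$, from $\mathbf Y$ a similar $\mathbf v(x)\approx\mathbf q$, and then observes that the composite $\mathbf v(\mathbf u(x))\approx\mathbf v(\mathbf p)$ holds in $\mathbf X$ by replacement and in $\mathbf Y$ because both sides collapse to $\mathbf q$, while its two sides still have different contents. This needs no computation inside $\mathbf{IZ}$ at all, whereas your plan stalls at the ``carefully chosen non-trivial term $t$'': no such $t$ is exhibited and no derivation from an arbitrary failing identity is sketched.

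For $\mathbf{BA}$ the single-identity format is indeed the right one, and the paper realizes it with $0\approx 0'$ (Lemma \ref{without BA}); but the substance of that step is a concrete computation --- one must first derive $0'\to 0'\approx 0'$ and $0\to 0'\approx 0'$ from the axioms \eqref{I} and \eqref{0''=0}, and then show that any algebra violating $0\approx 0'$ contains $\{0,0'\}$ as a subalgebra isomorphic to $\mathbf{2_b}$. Your proposed substitute --- reading a witness off the Cornejo--Sankappanavar classification of simple implication zroupoids --- does not close the gap: knowing which simple algebras lie in $\mathbf X$ and $\mathbf Y$ separately does not let you conclude $\mathbf{2_b}\notin\mathbf{X\vee Y}$ from $\mathbf{2_b}\notin\mathbf X,\mathbf Y$ unless you can control the subdirectly irreducible (or simple) members of a join, which is J\'onsson's-lemma territory and requires congruence distributivity that $\mathbf{IZ}$ is not known to enjoy. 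So the routine part of your argument (the failure check in the two-element algebras and the closure of equational theories under joins) is fine, but the identity-producing core of both cases is missing.
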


\begin{proof}
First of all, we consider the following two 2-element implication zroupoids $\mathbf{2_s}$ and $\mathbf{2_b}$:

\begin{center}
$\mathbf{2_s}$:\enskip 
\begin{tabular}{r|rr}
$\to$&0&1\\
\hline
0&0&1\\
1&1&1
\end{tabular}
\hskip 2cm
$\mathbf{2_b}$:\enskip 
\begin{tabular}{r|rr}
$\to$&0&1\\
\hline
0&1&1\\
1&0&1
\end{tabular}
\end{center}

\noindent It is evident that the algebras $\mathbf{2_s}$ and $\mathbf{2_b}$ generate the varieties $\mathbf{SL}$ and $\mathbf{BA}$ respectively.

First, we shall prove Proposition \ref{without SL or BA} with $\mathbf{A=SL}$. In fact, this claim is well known and can be easily deduced from the known universal-algebraic facts summarized in the survey \cite{Plonka-Romanowska-92}, for instance. However, for the sake of completeness, we prefer to provide here a simple proof not depending on any other results.

Let $\mathbf X$ and $\mathbf Y$ be varieties of implication zroupoids with $\mathbf{X,Y\nsupseteq SL}$. Lemma \ref{word problem}(i) implies that the variety $\mathbf X$ satisfies some identity $\mathbf{a\approx b}$ such that $\con(\mathbf a)\ne\con(\mathbf b)$. We can assume without less of generality that there is a variable $x$ with $x\in\con(\mathbf a)\setminus\con(\mathbf b)$. One can substitute 0 for all variables occurring in the identity $\mathbf{a\approx b}$ except $x$. Then we obtain an identity of the form $\mathbf u(x)\approx\mathbf p$ such that $\con(\mathbf u)=\{x\}$ and $\con(\mathbf p)=\varnothing$. Clearly, this identity holds in $\mathbf X$. Analogously, the variety $\mathbf Y$ satisfies an identity of the form $\mathbf v(x)\approx\mathbf q$ such that $\con(\mathbf v)=\{x\}$ and $\con(\mathbf q)=\varnothing$. It is evident that the variety $\mathbf X$ satisfies the identity
\begin{equation}
\label{identity in the join}
\mathbf v(\mathbf u(x))\approx\mathbf v(\mathbf p).
\end{equation}
Furthermore, substituting $\mathbf u(x)$ first, and then $\mathbf p$ for $x$ in $\mathbf v(x)\approx\mathbf q$, we get that the identities $\mathbf v(\mathbf u(x))\approx\mathbf q$ and $\mathbf v(\mathbf p)\approx\mathbf q$ are satisfied in $\mathbf Y$. This means that the identity \eqref{identity in the join} holds in $\mathbf Y$ and therefore, in $\mathbf{X\vee Y}$. Since $\con(\mathbf v(\mathbf u(x)))=\{x\}$ and $\con(\mathbf v(\mathbf p))=\varnothing$, we can apply Lemma \ref{word problem}(i) and conclude that $\mathbf{X\vee Y\nsupseteq SL}$. Thus, we have proved Proposition \ref{without SL or BA} with $\mathbf{A=SL}$. 

Next, to prove Proposition \ref{without SL or BA} with $\mathbf{A=SL}$. To prove this proposition with $\mathbf{A=BA}$, we need the following

\begin{lemma}
\label{general properties}
The variety $\mathbf{IZ}$ satisfies the following identities:
\begin{align}
\label{(((0'x)y)z =(xy)z}&(x\to y)\to z\approx((0'\to x)\to y)\to z,\\
\label{(xy)z=((xy)z)''}&(x\to y)\to z\approx((x\to y)\to z)'',\\
\label{((0x)0')y=(x0'))y}&((0\to x)\to 0')\to y\approx(x\to 0')\to y,\\
\label{0'0'=0'}&0'\to 0'\approx 0',\\ 
\label{00'=0'}&0\to 0'\approx 0'. 
\end{align}
\end{lemma}

\begin{proof}
The identities \eqref{(((0'x)y)z =(xy)z} and \eqref{(xy)z=((xy)z)''} were found in \cite[Lemma 7.5(d)]{Sankappanavar-12} and \cite[Lemma 2.8(2)]{Cornejo-Sankappanavar-17b} respectively. This is the deduction of the identity \eqref{((0x)0')y=(x0'))y}:
\begin{align*}
((0\to x)\to 0')\to y &\stackrel{\eqref{I}}\approx((0''\to 0)\to(x\to 0')')'\to y\\
&\stackrel{\eqref{0''=0}}\approx(0'\to(x\to 0')')'\to y\stackrel{\eqref{(((0'x)y)z =(xy)z}}\approx(x\to 0')''\to y\\
&\stackrel{\eqref{(((0'x)y)z =(xy)z}}\approx((0'\to x)\to 0')''\to y\stackrel{\eqref{(xy)z=((xy)z)''}}\approx((0'\to x)\to 0')\to y\\
&\stackrel{\eqref{(((0'x)y)z =(xy)z}}\approx(x\to 0')\to y,
\end{align*}
Now we have to verify the following weakened version of the identity \eqref{00'=0'}:
\begin{equation}
\label{(00')x=(0')x} (0\to 0') \to x\approx 0'\to x.
\end{equation}
Indeed, we have
\begin{align*}
(0\to 0')\to x&\stackrel{\eqref{((0x)0')y=(x0'))y}}\approx(0'\to 0')\to x\stackrel{\eqref{(((0'x)y)z =(xy)z}}\approx((0'\to 0')\to 0')\to x\\
&\stackrel{\eqref{(xy)z=((xy)z)''}}\approx((0'\to 0')\to 0')''\to x\stackrel{\eqref{(((0'x)y)z =(xy)z}}\approx(0'\to 0')''\to x\\
&\stackrel{\eqref{(((0'x)y)z =(xy)z}}\approx 0'''\to x \stackrel{\eqref{0''=0}}\approx 0'\to x.
\end{align*}
Now we are ready to deduct the identities \eqref{0'0'=0'} and \eqref{00'=0'}:
\begin{center}
\begin{tabular}{lll}
\eqref{0'0'=0'}:&$0'\to 0'$&$\stackrel{\eqref{(00')x=(0')x}}\approx(0\to 0')\to 0'\stackrel{\eqref{I}}\approx((0''\to 0)\to(0'\to 0')')' $\\
&&$\stackrel{\eqref{((0x)0')y=(x0'))y}}\approx((0''\to 0)\to(0\to 0')')'\stackrel{\eqref{(00')x=(0')x}}\approx((0''\to 0)\to 0'')' $\\
&&$\stackrel{\eqref{0''=0}}\approx((0\to 0)\to 0)'=0'''\stackrel{\eqref{0''=0}}\approx 0'$,\\[\smallskipamount]
\eqref{00'=0'}:&$0\to 0'$&$\stackrel{\eqref{0''=0}}\approx 0''\to 0'=(0'\to 0)\to 0'\stackrel{\eqref{I}}\approx((0''\to 0')\to(0\to 0')')'$\\
&&$\stackrel{\eqref{0''=0}}\approx((0\to 0')\to(0\to 0')')'\stackrel{\eqref{(00')x=(0')x}}\approx(0'\to(0\to 0')')'$\\
&&$\stackrel{\eqref{(00')x=(0')x}}\approx(0'\to 0'')'\stackrel{\eqref{0''=0}}\approx(0'\to 0)'=0'''\stackrel{\eqref{0''=0}}\approx 0'$.
\end{tabular}
\end{center}
Lemma is proved.
\end{proof}

Now we are ready to prove the following assertion that evidently implies Proposition \ref{without SL or BA} with $\mathbf{A=BA}$.

\begin{lemma}
\label{without BA}
A variety of implication zroupoids $\mathbf X$ does not contain the variety $\mathbf{BA}$ if and only if $\mathbf X$ satisfies the identity
\begin{equation}
\label{0 equals 0 prime}
0\approx 0'.
\end{equation}
\end{lemma}

\begin{proof}
The ``only if'' part is evident because $\mathbf{2_b}$ does not satisfy \eqref{0 equals 0 prime}. Now we verify the ``if'' part. Arguing a contradiction, we suppose that $\mathbf{X\nsupseteq BA}$ and $\mathbf X$ does not satisfy the identity \eqref{0 equals 0 prime}. Let $A$ be an algebra in $\mathbf X$ such that \eqref{0 equals 0 prime} wrong in $A$. Put $1:=0'$. Observe that
$$
0\to 1=0\to 0'\stackrel{\eqref{00'=0'}}=1\stackrel{\eqref{0'0'=0'}}=0'\to 0'=1\to 1\text{ and } 1\to 0=0'\to 0=0''\stackrel{\eqref{0''=0}}=0.
$$
Hence $\langle\{0,1\},\to,0\rangle$ is a subalgebra of $A$ which is isomorphic to $\mathbf{2_b}$, implying that $\mathbf{2_b}\in\mathbf X$.
\end{proof}

Thus, we have proved Proposition \ref{without SL or BA} with $\mathbf{A=BA}$. This completes the proof of this proposition as a whole.
\end{proof}

\subsection*{Acknowledgements}
This work was started when the second and the third authors took part in the Emil Artin International Conference held in Yerevan in May--June of 2018. The authors are deeply grateful to Professor Yuri Movsisyan and his colleagues for the excellent organization of the conference and the creation of the favorable atmosphere that contributed to the appearance of this article. The authors would like to express also their gratitude to the anonymous referee for his/her valuable remarks and suggestions that contributed to a significant improvement of the original version of the manuscript, in general and to the proof of Theorem \ref{main result} given in Section \ref{proof}, in particular.

\end{document}